\theoremstyle{plain}
\newtheorem{thm}{Theorem}[section]
\newtheorem{lem}[thm]{Lemma}
\newtheorem{clm}[thm]{Claim}
\newtheorem{cor}[thm]{Corollary}
\theoremstyle{definition}
\newtheorem{ex}[thm]{Example}
\title{A Polynomial Algorithm for Minimizing $k$-Distant Submodular Functions}
\author{Ryuhei Mizutani\thanks{Department of Mathematical Informatics, Graduate School of Information Science and Technology, The University of Tokyo, Tokyo, 113-8656, Japan. E-mail: \texttt{ryuhei\_mizutani@mist.i.u-tokyo.ac.jp}}}
\date{}
\begin{document}

\maketitle

\begin{abstract}
This paper considers the minimization problem of relaxed submodular functions.
For a positive integer $k$, a set function is called $k$-distant submodular if the submodular inequality holds for every pair whose symmetric difference is at least $k$.
This paper provides a polynomial time algorithm to minimize $k$-distant submodular functions for a fixed positive integer $k$.
This result generalizes the tractable result of minimizing 2/3-submodular functions, which satisfy the submodular inequality for at least two pairs formed from every distinct three sets.

\end{abstract}

\section{Introduction}
Let $S$ be a finite set. 
A set function $f:2^S\to \mathbb{R}$ is called \textit{submodular} if the \textit{submodular inequality} $f(X)+f(Y)\ge f(X\cup Y)+f(X\cap Y)$ holds for every pair $X,Y\subseteq S$.
In the submodular function minimization problem, we are given an evaluation oracle of a submodular function $f$, and the task is to find a set $X\subseteq S$ minimizing $f(X)$.
Here, an evaluation oracle computes $f(X)$ for any given $X\subseteq S$.
The first polynomial time algorithm for this problem was given by Gr\"{o}tschel, Lov{\'{a}}sz, and Schrijver \cite{grotschel1981} with the aid of the polynomial solvability equivalence of the separation and optimization.
The first combinatorial strongly polynomial time algorithms were obtained much later by Iwata, Fleischer, and Fujishige \cite{iwata2001} and by Schrijver \cite{schrijver2000}.
For additional improvements in running time, refer to e.g. \cite{chakrabarty2017,iwata2009,lee2015,orlin2009}.

In this paper, we shall introduce a relaxation of submodular functions and consider its minimization problem.
For a positive integer $k$, a set function $f:2^S\rightarrow \mathbb{R}$ is called \textit{$k$-distant submodular} if the submodular inequality holds for every pair $X,Y\subseteq S$ with $|X\bigtriangleup Y|\geq k$, where $X\bigtriangleup Y:=(X\setminus Y)\cup (Y\setminus X)$ is the \textit{symmetric difference} of $X$ and $Y$.
Note that $1$-distant submodular functions and $2$-distant submodular functions are exactly submodular functions because every pair $X,Y\subseteq S$ with $|X\bigtriangleup Y|\le 1$ satisfies $X\subseteq Y$ or $X\supseteq Y$, which implies that such a pair satisfies the submodular inequality.
There are several examples of $k$-distant submodular functions.

\begin{ex}[Cut function with negative edge weight]
For a complete graph $K_n=(V,E)$ and a positive integer $k$, let $w:E\to \mathbb{R}$ be a weight function such that $\sum_{e\in X}w(e)\ge 0$ for every $X\subseteq \delta(v)$ and $v\in V$ with $|X|\ge k$, where $\delta(v)$ is the set of edges incident to $v$.
Then the cut function $f:2^V\to \mathbb{R}$ defined as
\begin{align*}
f(U):=\sum_{e=(u,v)\in E,u\in U,v\in V\setminus U}w(e)
\end{align*}
for every $U\subseteq V$ is $2k-1$-distant submodular.
\end{ex}

\begin{ex}[Perturbation of strictly submodular function]
Let $f:2^S\to \mathbb{Z}$ be an integer-valued \textit{strictly submodular function}, i.e.,  $f(X)+f(Y)>f(X\cup Y)+f(X\cap Y)$ holds for any pair $X,Y\subseteq S$ such that both of $X\setminus Y$ and $Y\setminus X$ are nonempty.
For a positive integer $k$, let $\epsilon:2^S\to \mathbb{R}$ be a set function such that $|\epsilon(X)|\le k$ for every $X\subseteq S$.
Define $f_\epsilon:2^S\to \mathbb{R}$ as $f_\epsilon(X):=f(X)+\epsilon(X)$ for every $X\subseteq S$.
Then $f_\epsilon$ is $4k+1$-distant submodular.
\end{ex}

\begin{ex}[Minimum rank function of two sparse paving matroids]
A matroid $M$ is called \textit{sparse paving} if $M$ and its dual are \textit{paving}, i.e., every circuit has size equal to or greater than the rank of the matroid.
Then the minimum of two rank functions of sparse paving matroids is 3-distant submodular, which arises in the matroid intersection problem under the minimum rank oracle \cite{barasz2006, berczi2024,berczi2023res}.
\end{ex}

\begin{ex}[2/3-Submodular function]
A set function is called \textit{2/3-submodular} \cite{mizutani2024} if it satisfies the submodular inequality for at least two pairs formed from every distinct three subsets.
The class of 2/3-submodular functions is a subclass of 3-distant submodular functions.
\end{ex}

Recently, Mizutani and Yoshida \cite{mizutani2024} showed that 2/3-submodular functions can be minimized in polynomial time with the aid of the ellipsoid method, which yields a polynomial algorithm to obtain a variant of the supermodular coloring \cite{mizutani2022}.
The main result of this paper is a polynomial algorithm to minimize $k$-distant submodular functions, which generalizes the result of \cite{mizutani2024}: 

\begin{thm}
\label{thm:main}
Let $f$ be an integer-valued $k$-distant submodular function on $S$.
A minimizer of $f$ can be found in a polynomial number of arithmetic steps and function evaluations in $|S|^k$ and $\log B$, where $B$ is an upper bound on the absolute values of $f$.
\end{thm}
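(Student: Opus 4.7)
My plan is to generalize the algorithm of Mizutani and Yoshida \cite{mizutani2024}, who handle the case $k = 3$ (the 2/3-submodular functions), to arbitrary fixed $k$. The strategy is to reduce the minimization of $f$ to polynomially many instances of ordinary submodular function minimization, with enumeration over combinatorial structures of size up to $k-1$ producing the $|S|^k$ factor in the running time.

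Concretely, a binary search on the threshold $t \in \{-B, \ldots, B\}$ would reduce the problem to $O(\log B)$ decision problems of the form ``is there $X \subseteq S$ with $f(X) \le t$?''. For each ordered pair of disjoint sets $A, B \subseteq S$ with $|A \cup B| \le k - 1$ -- there are $O(|S|^{k-1})$ such probes -- I would construct an auxiliary submodular function $g_{A,B}$ on the sublattice $\{X : A \subseteq X,\ X \cap B = \emptyset\}$, minimize it via a standard submodular minimization algorithm \cite{iwata2001, schrijver2000}, and take the overall minimum as the output. The correctness would rely on a structural claim: every global minimizer $X^*$ of $f$ is recovered by at least one probe $(A, B)$, because any competing set $Y$ with $|X^* \Delta Y| < k$ can be excluded from the corresponding sublattice by an appropriate choice of probe, while for $Y$ with $|X^* \Delta Y| \ge k$ the $k$-distant submodular inequality is directly available.

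The main obstacle is constructing $g_{A,B}$ so that it is genuinely submodular on its sublattice. A direct restriction of $f$ does not suffice, since two sets within the sublattice can still have symmetric difference smaller than $k$, where $f$ is allowed to violate submodularity. I expect overcoming this to require either (i) a ground-set contraction that forces every pair within the restricted family to differ on at least $k$ elements, or (ii) an ellipsoid-method approach as in \cite{mizutani2024}, using a separation oracle that explicitly enumerates $O(|S|^k)$ candidate violating pairs per call and invokes $k$-distant submodularity on the remaining far pairs via a greedy-style procedure. Either route yields the claimed running time polynomial in $|S|^k$ and $\log B$, and I expect the ellipsoid-based route to be the cleaner one because it most directly extends the established argument for $k = 3$.
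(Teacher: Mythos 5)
There is a genuine gap, and it is exactly the step you flag as ``the main obstacle'' and then leave unresolved. Your concrete route---probing all disjoint $A,B$ with $|A\cup B|\le k-1$ and minimizing a restriction of $f$ on the sublattice $\{X: A\subseteq X,\ X\cap B=\emptyset\}$ by ordinary submodular minimization---does not go through: fixing at most $k-1$ elements in or out does nothing to control the symmetric difference of two sets inside the sublattice, since they may still differ on fewer than $k$ elements \emph{outside} $A\cup B$, and there the submodular inequality may fail. Your correctness claim is also aimed at the wrong target: the difficulty is not excluding competitors $Y$ close to a minimizer $X^*$, but the fact that the restricted function is simply not submodular, so the inner subroutine you want to call does not exist. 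Neither of your proposed repairs is worked out and neither obviously works: a ``ground-set contraction'' into blocks of size $\ge k$ only lets you evaluate $f$ on unions of blocks, which need not contain a minimizer; and a ``separation oracle that enumerates $O(|S|^k)$ candidate violating pairs'' is circular, because the separation problem for $P(f)$ at a point $x$ is precisely the question of whether $\min_T (f(T)-x(T))\ge 0$, i.e., another $k$-distant submodular minimization, not a pair-enumeration task.

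The paper breaks this circularity in the opposite direction, and that is the missing idea. It reduces minimization of $f$ (via binary search and the Gr\"otschel--Lov\'asz--Schrijver equivalence, as you anticipate) to the linear program $(\mathrm{P})$ of maximizing $w^\top x$ over $P(f)$, and then solves $(\mathrm{P})$ directly by a structural result on the \emph{dual}: ordering $S$ so that $w(s_1)\ge\cdots\ge w(s_n)$ and setting $\mathcal{C}=\{S_i\Delta T \mid 0\le i\le n,\ |T|\le k-2\}$, it shows (Lemma~\ref{lem:chaink}) that $(\mathrm{D})$ has an optimal solution supported in $\mathcal{C}$---a ``chain perturbed by at most $k-2$ elements'' family of size $O(|S|^{k-1}\cdot|S|)$---by an uncrossing argument that uses the $k$-distant submodular inequality only on incomparable far pairs and a counting argument with the weights to rule out supports outside $\mathcal{C}$. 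Consequently $(\mathrm{P})$ can be replaced by the polynomially described LP over $P_{\mathcal{C}}(f)$, and a perturbation of $w$ to strictly decreasing weights (Lemma~\ref{lem:extreme_monotonek} and Theorem~\ref{thm:kpolynomial}) guarantees that an extreme-point optimum of the restricted LP is feasible and optimal for $(\mathrm{P})$. Nothing in your proposal supplies an equivalent of this small-support/greedy-type lemma, and without it the claimed $|S|^k$ bound has no justification.
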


Note that if $k$ is a fixed positive integer, then integer-valued $k$-distant submodular functions can be minimized in polynomial time by Theorem \ref{thm:main}.

The polynomial algorithm for minimizing submodular functions by Gr\"{o}tschel, Lov{\'{a}}sz, and Schrijver \cite{grotschel1981} reduces the minimization problem to the corresponding linear programming problem over a submodular polyhedron with the aid of the ellipsoid method.
The polynomial algorithm for minimizing 2/3-submodular functions by Mizutani and Yoshida \cite{mizutani2024} employs the same reduction to the linear program over a polyhedron associated with a 2/3-submodular function.
Our algorithm for minimizing $k$-distant submodular functions also uses the same reduction described as follows.
Let $f:2^S\to \mathbb{R}$ be a $k$-distant submodular function and assume $f(\emptyset)=0$ (if $f(\emptyset)\ne 0$, we set $f(X):=f(X)-f(\emptyset)$ for every $X\subseteq S$).
Let $w\in \mathbb{R}_+^S$ be a weight vector and $P(f)=\{x\in \mathbb{R}^S\mid \ x(T)\leq f(T)\ \mathrm{for\ every\ }T\subseteq S\}$, where we use the notation $x(T)=\sum_{s\in T}x(s)$.
To minimize $f$, we consider the following linear program:
\begin{align}
(\mathrm{P})\ \ \ \ \ \mathrm{maximize}&\ \ \ w^\top x\notag\\
\mathrm{subject\ to}&\ \ \ x\in P(f).\notag
\end{align}
If $f$ is integer-valued and $w$ is rational, then by the result of Gr\"{o}tschel, Lov{\'{a}}sz, and Schrijver \cite{grotschel1981} the minimization problem of $f$ can be reduced to (P) in polynomial time with the aid of the ellipsoid method.
Indeed, the separation problem corresponding to (P) includes the problem of deciding whether $f$ is nonnegative or not. 
Hence, by adding the same nonnegative integer to the values of $f$ (except for $f(\emptyset)$) and using the binary search on the separation problem, we can solve the minimization problem of $f$.
Since the separation problem corresponding to (P) can be reduced to the linear program (P) in polynomial time by the ellipsoid method \cite{grotschel1981}, the minimization of $f$ can be reduced to the linear program (P) in time polynomial in $|S|$ and $\log B$.
Hence, in order to prove Theorem \ref{thm:main}, we will show that (P) can be solved in time polynomial in $|S|^k$.

To solve the linear program (P), we investigate the structure of the supports of dual optimal solutions.
This approach is a generalization of the polynomial algorithm by Mizutani and Yoshida~\cite{mizutani2024} for solving the linear program over a polyhedron associated with a 2/3-submodular function.
Indeed, they showed that the linear program over a 2/3-submodular polyhedron has a dual optimal solution whose support has polynomial size by using the uncrossing technique, which led to a polynomial algorithm for solving this  linear program.
Extending this result to the framework of $k$-distant submodular functions, we show that (P) has a dual optimal solution whose support has size polynomial in $|S|^k$ by using the uncrossing technique.
Consequently, this implies that (P) can be solved in time polynomial in $|S|^k$.

\paragraph{Related work.}
There are several relaxed forms of submodularity similar to $k$-distant submodularity.
A set function $f:2^S\to \mathbb{R}$ is called \textit{intersecting submodular} if the submodular inequality holds for every pair $X,Y\subseteq S$ with $|X\cap Y|\ge 1$.
As a slight variant, it is called \textit{crossing submodular} if the submodular inequality holds for every pair $X,Y\subseteq S$ with $|X\cap Y|\ge 1$ and $|X\cup Y|\le |S|-1$.
These types of submodularity are similar to $k$-distant submodularity in the sense that whether the submodular inequality holds relies on the sizes of the results of set operations.
Intersecting or crossing submodular functions can be minimized in polynomial time by reducing the problem to submodular function minimization (see e.g. Chapter 49 in \cite{schrijver2003}).

\paragraph{Organization.}
The rest of this paper is organized as follows.
In Section \ref{sec:pre}, we define several terms and notations, and describe some basic properties including an upper bound on the absolute values of $k$-distant submodular functions.
Section \ref{sec:polyalgo} shows that (P) can be solved in time polynomial in $|S|^k$ with the aid of the structure of the supports of dual optimal solutions.
Section \ref{sec:application} provides two applications of the polynomial algorithm to minimize $k$-distant submodular functions.
The first application is a complexity dichotomy for the minimization problem of $p/q$-submodular functions, which satisfy the submodular inequality for at least $p$ pairs formed from every distinct $q$ sets.
The second application is a tractable result for weighted matroid intersection under the minimum rank oracle \cite{berczi2024,berczi2023res} for some class of matroids.
Section \ref{sec:hardness} proves that $k$-distant submodular function minimization is \textit{W}[1]-hard when parameterized by $k$.
This implies that $k$-distant submodular function minimization with a ground set $S$ has no $g(k)\cdot |S|^{O(1)}$-time algorithm for any computable function $g$.

\section{Preliminaries}
\label{sec:pre}
Throughout this paper, the set of reals, nonnegative reals, integers, nonnegative integers, positive integers, and rationals are denoted by $\mathbb{R},\mathbb{R}_+,\mathbb{Z},\mathbb{Z}_+,\mathbb{N}$, and $\mathbb{Q}$, respectively.
In addition, we assume that for $k$-distant submodular functions on a ground set $S$, $|S|\ge k\ge 2$.

A \textit{matroid} is a pair $\mathbf{M}=(S,\mathcal{I})$ of a ground set $S$ and an \textit{independent set family} $\mathcal{I}$, which satisfies the following three conditions: (i) $\emptyset\in \mathcal{I}$, (ii) $X\subseteq Y\in \mathcal{I}\Rightarrow X\in \mathcal{I}$, (iii) $X,Y\in \mathcal{I}$ and $|X|<|Y|\Rightarrow \exists e\in Y\setminus X$ such that $X\cup \{e\}\in \mathcal{I}$.
A member of $\mathcal{I}$ is called an \textit{independent set}.
The \textit{rank function} $r:2^S\to \mathbb{Z}_+$ of $\mathbf{M}$ is defined as follows: $r(X)=\max\{|I|\mid I\subseteq X,\ I\in \mathcal{I}\}$ for each $X\subseteq S$.
The rank function $r$ is monotone submodular, and satisfies $r(X)\le |X|$ and $r(X\cup \{e\})\le r(X)+1$ for every $X\subseteq S$ and $e\in S$.
For more details on matroids, we refer to \cite{oxley2011,schrijver2003}.

Let $f:2^S\rightarrow \mathbb{R}$ be a $k$-distant submodular function.
Assume that $f(\emptyset)=0$.
We show that an upper bound on the absolute values of $f$ can be found in time polynomial in $|S|^k$.
This follows from the following lemma:
\begin{lem}
\label{lem:upper}
For any $X\subseteq S$, we have
\begin{align*}
f(S)-\sum_{T\subseteq S,|T|\le k}|f(T)|\le f(X)\le \sum_{T\subseteq S,|T|\le k}|f(T)|.
\end{align*}
\end{lem}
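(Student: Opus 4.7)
My plan is to derive the bound from a single consequence of the $k$-distant hypothesis, namely a subadditivity statement on disjoint sets. For any disjoint $A,B\subseteq S$ with $|A|+|B|\ge k$ we have $|A\Delta B|=|A|+|B|\ge k$, so the $k$-distant submodular inequality applies and, together with $f(\emptyset)=0$, yields $f(A\cup B)\le f(A)+f(B)$. This is the only use of $k$-distant submodularity that the proof will require.

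For the upper bound $f(X)\le M:=\sum_{T\subseteq S,\,|T|\le k}|f(T)|$, the argument splits on the size of $X$. If $|X|\le k$, then $|f(X)|$ is itself one of the summands in $M$, so $f(X)\le M$ is immediate. If $|X|>k$, I would partition $X$ into two disjoint halves $A$ and $B$ of sizes $\lceil|X|/2\rceil$ and $\lfloor|X|/2\rfloor$; because $|A|+|B|=|X|\ge k$, the subadditivity step gives $f(X)\le f(A)+f(B)$. Iterating this splitting recursively on any piece still of size greater than $k$ produces a partition $X=L_1\sqcup\cdots\sqcup L_m$ with $|L_i|\le k$ for every $i$ and $f(X)\le\sum_i f(L_i)$. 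Since the $L_i$ are pairwise disjoint and non-empty, they are distinct subsets of $S$, so
\[
f(X)\le\sum_{i=1}^m f(L_i)\le\sum_{i=1}^m|f(L_i)|\le\sum_{T\subseteq S,\,|T|\le k}|f(T)|=M.
\]

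For the lower bound, I would apply $k$-distant submodularity to the pair $(X,\,S\setminus X)$; their symmetric difference is $S$, which has size at least $k$ by the standing assumption $|S|\ge k$. The resulting inequality $f(X)+f(S\setminus X)\ge f(S)+f(\emptyset)=f(S)$, combined with the upper bound already established applied to $S\setminus X$, gives $f(X)\ge f(S)-f(S\setminus X)\ge f(S)-|f(S\setminus X)|\ge f(S)-M$, which is the desired inequality.

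The step requiring the most care is the recursive halving: one must check that every internal node of the recursion tree has size strictly greater than $k$ (so that the hypothesis $|A|+|B|\ge k$ holds at every application of subadditivity), and that the leaves are indeed distinct subsets of $S$ so that the sum of $|f(L_i)|$ is bounded by $M$ rather than by some multiple of it. Both points follow from the disjointness of the two children produced at each split, so I do not expect any real difficulty beyond this bookkeeping.
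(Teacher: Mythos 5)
Your proof is correct, and it is worth separating the two halves when comparing it with the paper's. For the upper bound you do essentially what the paper does: writing $M=\sum_{T\subseteq S,\,|T|\le k}|f(T)|$, both arguments partition $X$ into disjoint pieces of size at most $k$ and use the disjoint-subadditivity consequence $f(A\cup B)\le f(A)+f(B)$ of $k$-distant submodularity (valid since $|A\Delta B|=|A|+|B|\ge k$ and $f(\emptyset)=0$); the paper takes greedy blocks of size exactly $k$ and merges them bottom-up, while you split top-down by repeated halving, an equivalent bookkeeping choice, and your two checkpoints (every split node has size $>k$, the leaves are distinct nonempty sets of size $\le k$) are exactly the points that need verifying. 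For the lower bound your route is genuinely different and shorter: the paper gives an independent argument, partitioning $S\setminus X$ into blocks $P_1,\dots,P_l$ of size $k$ (the last possibly smaller) and absorbing them into $X$ one at a time while bounding $\sum_i f(P_i)\le M$, whereas you apply the $k$-distant inequality a single time to the complementary pair $(X,S\setminus X)$ — legitimate since $|X\Delta(S\setminus X)|=|S|\ge k$ — and then reuse the already-proved upper bound on $f(S\setminus X)$. This makes the lower bound a two-line corollary of the upper bound instead of a separate iteration, at the mild cost of coupling the two statements. One correction to your final chain: the step $f(S)-f(S\setminus X)\ge f(S)-|f(S\setminus X)|\ge f(S)-M$ is not justified as written, since the last inequality needs $|f(S\setminus X)|\le M$, and when $|S\setminus X|>k$ a bound on the absolute value of $f(S\setminus X)$ is precisely what the lemma is in the process of proving, so invoking it would be circular. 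Just drop the absolute value: the upper bound gives $f(S\setminus X)\le M$, hence $-f(S\setminus X)\ge -M$ and $f(X)\ge f(S)-f(S\setminus X)\ge f(S)-M$ directly.
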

\begin{proof}
We first prove
\begin{align}
\label{eq:upper1}
f(S)-\sum_{T\subseteq S,|T|\le k}|f(T)|\le f(X).
\end{align}
Let $P_1,P_2,\ldots,P_l$ be disjoint sets such that $P_1\cup P_2\cup \cdots \cup P_l=S\setminus X,\ |P_i|=k$ for $i=1,\ldots,l-1$, and $|P_l|\le k$.
Then since $f$ is $k$-distant submodular, we have
\begin{align*}
&f(X)+\sum_{T\subseteq S,|T|\le k}|f(T)|\ge f(X)+\sum_{1\le i\le l}f(P_i)=f(X)+f(P_1)+\sum_{2\le i\le l}f(P_i)\\&\ge f(X\cup P_1)+\sum_{2\le i\le l}f(P_i)\ge \cdots \ge f(X\cup P_1\cup \cdots \cup P_{l-1})+f(P_l)\ge f(S).
\end{align*}
The last inequality follows from $|S|\ge k$.
Hence, the inequality (\ref{eq:upper1}) holds.
We next prove
\begin{align}
\label{eq:upper2}
f(X)\le \sum_{T\subseteq S,|T|\le k}|f(T)|.
\end{align}
If $|X|\le k$, then we have 
\begin{align*}
\sum_{T\subseteq S,|T|\le k}|f(T)|\ge |f(X)|\ge f(X),
\end{align*}
as required.
Suppose that $|X|>k$.
Let $Q_1,Q_2,\ldots,Q_m$ be disjoint sets such that $Q_1\cup Q_2\cup \cdots \cup Q_m=X$, $|Q_i|=k$ for $i=1,\ldots,m-1$, and $|Q_m|\le k$.
Then since $f$ is $k$-distant submodular, we have
\begin{align*}
\sum_{T\subseteq S,|T|\le k}|f(T)|&\ge \sum_{1\le i\le m}f(Q_i)\ge f(Q_1\cup Q_2)+\sum_{3\le i\le m}f(Q_i)\ge \cdots \ge f(Q_1\cup \cdots \cup Q_{m-1})+f(Q_m)\\&\ge f(X).
\end{align*}
The last inequality follows from $|X|>k$.
Hence, the inequality (\ref{eq:upper2}) holds, which completes the proof.
\end{proof}

Let $M=\sum_{T\subseteq S,|T|\le k}|f(T)|$.
By Lemma \ref{lem:upper}, $\max\{M,|f(S)-M|\}$ is an upper bound on the absolute values of $f$, and can be computed in time polynomial in $|S|^k$.
For a $k$-distant submodular function $g$ with $g(\emptyset)\ne 0$, we can compute an upper bound on the absolute values of $g-g(\emptyset)$ by Lemma \ref{lem:upper}, which implies that an upper bound on the absolute values of $g$ can be computed in time polynomial in $|S|^k$.

\section{A polynomial algorithm}
\label{sec:polyalgo}
In this section, we prove that an optimal solution of the linear program $(\mathrm{P})$ can be computed in time polynomial in $|S|^k$.
Recall that $(\mathrm{P})$ is defined as follows for a $k$-distant submodular function $f:2^S\rightarrow \mathbb{R}$ and a weight vector $w\in \mathbb{R}^S_+$:
\begin{align}
(\mathrm{P})\ \ \ \ \ \mathrm{maximize}&\ \ \ w^\top x\notag\\
\mathrm{subject\ to}&\ \ \ x\in P(f),\notag
\end{align}
where $P(f)=\{x\in \mathbb{R}^S\mid \ x(T)\leq f(T)\ \mathrm{for\ every\ }T\subseteq S\}$.
We first show that the dual linear program of $(\mathrm{P})$ has an optimal solution whose support is contained in a family of size polynomial in $|S|^k$.
Then we prove that $(\mathrm{P})$ can be reduced to a linear program whose constraints are restricted to that family.

Consider the following dual linear program of $(\mathrm{P})$:
\begin{align}
(\mathrm{D})\ \ \ \ \ \mathrm{minimize}&\ \ \ \sum_{T\subseteq S}y_T f(T)\notag\\
    \mathrm{subject\ to}&\ \ \ y\in \mathbb{R}_+^{2^S},\notag\\&\ \ \ \sum_{T\subseteq S}y_T\chi_T=w.\notag
\end{align}
Here, $\chi_T$ denotes the \textit{incidence vector} of a subset $T\subseteq S$ defined by
\begin{align*}
    \chi_T(s):=\begin{cases}
        1 & \mathrm{if\ }s\in T,\\
        0 & \mathrm{if\ }s\in S\setminus T.
    \end{cases}
\end{align*}
Order the elements of $S$ as $s_1,s_2,\ldots,s_n$ such that $w(s_1)\geq w(s_2)\geq \cdots\geq w(s_n)$. 
Define $U_i=\{s_1,\ldots,s_i\}$ for each $i=1,\ldots,n$, and $U_0=\emptyset$. 
Let $\mathcal{C}=\{U_i\bigtriangleup T\mid 0\leq i\leq n,\ T\subseteq S,\ |T|\le k-2\}$.

\begin{lem}
\label{lem:chaink}
There exists an optimal solution $y$ of $(\mathrm{D})$ such that its support $\mathcal{L}=\{T\subseteq S\mid y_T>0\}$ is included in $\mathcal{C}$.
\end{lem}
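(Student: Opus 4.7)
The approach is the classical LP-uncrossing argument tailored to the $k$-distant setting, combined with a secondary potential that encodes proximity to the chain $S_0\subset S_1\subset\cdots\subset S_n$ of prefixes determined by the ordering of $w$.

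\textbf{Uncrossing.} For $X,Y\in\mathrm{supp}(y)$ with $|X\Delta Y|\ge k$, transfer $\varepsilon=\min\{y_X,y_Y\}$ units of dual mass from $\{X,Y\}$ to $\{X\cup Y,\ X\cap Y\}$. The identity $\chi_X+\chi_Y=\chi_{X\cup Y}+\chi_{X\cap Y}$ preserves feasibility, and the $k$-distant submodular inequality $f(X)+f(Y)\ge f(X\cup Y)+f(X\cap Y)$ (available exactly because $|X\Delta Y|\ge k$) ensures the objective does not increase. Thus this operation maps dual optima of $(\mathrm{D})$ to dual optima.

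\textbf{Tiebreaker, and elimination of far pairs.} Among all optimal duals, select $y$ minimizing
\[\Phi(y)=\sum_T y_T\, d(T)^2,\qquad d(T)=\min_{0\le i\le n}|T\Delta S_i|,\]
with a lexicographic refinement by $-\sum_T y_T|T|^2$ to break ties. The first term penalizes sets that are far from the chain, while the second guarantees a strict decrease under the uncrossing of incomparable sets, since $|X\cup Y|^2+|X\cap Y|^2-|X|^2-|Y|^2=2|X\setminus Y|\cdot|Y\setminus X|>0$. Consequently, at the $\Phi$-minimizer no pair $X,Y\in\mathrm{supp}(y)$ satisfies $|X\Delta Y|\ge k$, for otherwise the uncrossing step above would produce another optimal dual with strictly smaller $\Phi$.

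\textbf{Membership in $\mathcal{C}$.} Combine the pair bound $|X\Delta Y|\le k-1$ with the equality constraint $\sum_T y_T\chi_T=w$ and the monotonicity $w(s_1)\ge\cdots\ge w(s_n)$ to conclude that every $T\in\mathrm{supp}(y)$ satisfies $d(T)\le k-2$, that is, $T\in\mathcal{C}$. The guiding idea is that if some $T\in\mathrm{supp}(y)$ had $d(T)\ge k-1$, then matching the monotone profile $w$ via the remaining support elements would force a partner $T'\in\mathrm{supp}(y)$ whose discrepancy against the chain combines with that of $T$ to produce $|T\Delta T'|\ge k$, contradicting the previous paragraph.

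\textbf{Main obstacle.} The delicate step is the last one: bridging the local pair bound $|X\Delta Y|\le k-1$ on the support to the global per-set bound $d(T)\le k-2$. The pair bound is symmetric and local, while $\mathcal{C}$-membership is tied to the specific chain induced by the sorted weights. Making rigorous the ``discrepancy matching'' argument---that a single support element far from the chain must have a distant partner in the support---requires careful joint use of the feasibility equation and the monotonicity of $w$, and this is where the precise structure of $\mathcal{C}$ (perturbations of size at most $k-2$ of the prefix chain) must enter the proof.
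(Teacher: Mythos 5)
Your first step (uncrossing incomparable pairs $X,Y$ with $|X\Delta Y|\ge k$ while preserving dual optimality) matches the paper's Claim, but the proposal has a genuine gap exactly where you flag it: the passage from the local pair bound to membership in $\mathcal{C}$ is the heart of the lemma, and you do not prove it. Note first that your intermediate conclusion is too strong and in fact false as stated: uncrossing is a no-op on comparable pairs, so it cannot eliminate pairs with $|X\Delta Y|\ge k$ in general (indeed $\mathcal{C}$ itself contains the full prefix chain $S_0\subset\cdots\subset S_n$, which has pairs of arbitrarily large symmetric difference); the correct and usable statement is the paper's: every incomparable pair in the support satisfies $|X\Delta Y|\le k-1$. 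Second, your sketched ``discrepancy matching'' idea---that a support set $U$ far from the chain forces a single partner $T'$ with $|U\Delta T'|\ge k$---does not work, because such a partner may be comparable to $U$, which the pair bound permits. The paper's actual argument is a global averaging over the whole support: for $U\in\mathcal{L}\setminus\mathcal{C}$ it takes the $k-1$ largest indices $i_p$ with $s_{i_p}\in U$ and the $k-1$ smallest indices $j_p$ with $s_{j_p}\notin U$, defines $\rho(T)=\rho_1(T)-\rho_2(T)$ counting, for each $T$ in the support, the index pairs $(i_p,j_p)$ that $T$ ``inverts,'' shows $\rho(T)\le 0$ for all $T\in\mathcal{L}$ (using both alternatives of the pair claim) and $\rho(U)=-(k-1)<0$, and then uses the feasibility equation $\sum_T y_T\chi_T=w$ to conclude $\sum_p\bigl(w(s_{j_p})-w(s_{i_p})\bigr)=\sum_T y_T\rho(T)<0$. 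With $w$ sorted this forces $i_p<j_p$ for some $p$, which in turn gives $|U\Delta S_{i_p}|\le k-2$, contradicting $U\notin\mathcal{C}$. None of this aggregation machinery appears in your proposal, and without it the lemma is not established.

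A secondary problem is the tiebreaker itself. You minimize lexicographically first $\Phi(y)=\sum_T y_T\,d(T)^2$ and then $-\sum_T y_T|T|^2$, but uncrossing can strictly increase the primary term $\Phi$ (the union and intersection may be farther from the chain than $X$ and $Y$), so at a lexicographic minimizer an uncrossable pair need not yield a lexicographically smaller optimum and your ``consequently'' does not follow. The fix is simply to drop $\Phi$ and use a single potential that strictly decreases under uncrossing of incomparable sets, e.g.\ your quadratic term or the paper's $\sum_T y_T|T|\,|S\setminus T|$; the chain structure then enters only through the counting argument described above, not through the choice of potential.
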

\begin{proof}
Let $y$ be an optimal solution of $(\mathrm{D})$ minimizing
\begin{align}
\sum_{T\subseteq S}y_T|T||S\setminus T|.\notag
\end{align}
Then, the support $\mathcal{L}$ of $y$ is close to being a chain in some sense.
\begin{clm}
\label{clm:supportk}
Every pair of sets $X,Y\in \mathcal{L}$ satisfies one of the following two conditions:
\begin{itemize}
    \item $X$ and $Y$ are comparable.
    \item $|X\bigtriangleup Y|\le k-1$.
\end{itemize}
\end{clm}
\begin{proof}[Proof of Claim \ref{clm:supportk}]
Take a pair of sets $X,Y\in \mathcal{L}$. Suppose to the contrary that $X$ and $Y$ are incomparable, and $|X\bigtriangleup Y|\ge k$. 
Let $\alpha=\min\{y_X,y_Y\}$.
Then decrease $y_X$ and $y_Y$ by $\alpha$ and increase $y_{X\cup Y}$ and $y_{X\cap Y}$ by $\alpha$. 
Let $y'$ denote the vector obtained by this way. Then $y'$ is a feasible solution of $(\mathrm{D})$. 
Since $|X\bigtriangleup Y|\geq k$, we have $f(X)+f(Y)\geq f(X\cup Y)+f(X\cap Y)$, which implies that
\begin{align}
\sum_{T\subseteq S}y'_T f(T)\leq \sum_{T\subseteq S}y_T f(T).\notag
\end{align}
Hence, $y'$ is an optimal solution of $(\mathrm{D})$.
Since $X$ and $Y$ are incomparable, we have
\begin{align}
\sum_{T\subseteq S}y'_T|T||S\setminus T|<\sum_{T\subseteq S}y_T|T||S\setminus T|,\notag
\end{align}
which is a contradiction.
\end{proof}

Suppose to the contrary that $\mathcal{L}\not\subseteq \mathcal{C}$.
Take $V\in \mathcal{L}\setminus \mathcal{C}$.
Since $V=U_i\bigtriangleup (V\bigtriangleup U_i)$ for $i=0,\ldots,n$ and $V\notin \mathcal{C}$, we have $\min\{|V\bigtriangleup U_i|\mid 0\leq i\leq n\}\geq k-1$.
Note that $|V\bigtriangleup U_0| =|V|\geq k-1$ and $|V\bigtriangleup U_n| =|S\setminus V|\geq k-1$. 
Let $i_p$ be the $p$th largest integer such that $s_{i_p}\in V$ for $p=1,\ldots,k-1$.
Similarly, let $j_{k-p}$ be the $p$th smallest integer such that $s_{j_{k-p}}\notin V$ for $p=1,\ldots,k-1$.
Define 
\begin{align*}
&\rho_1(T)=|\{p\mid 1\le p\le k-1,s_{i_p}\notin T,s_{j_p}\in T\}|,\\
&\rho_2(T)=|\{p\mid 1\le p\le k-1,s_{i_p}\in T,s_{j_p}\notin T\}|,\\
&\rho_3(T)=|\{p\mid 1\le p\le k-1,s_{i_p},s_{j_p}\in T\mathrm{\ or\ }s_{i_p},s_{j_p}\notin T\}|,\\
& \rho(T)=\rho_1(T)-\rho_2(T)
\end{align*}
for each $T\in \mathcal{L}$.
Then we show $\rho(T)\le 0$ for any $T\in \mathcal{L}$ as follows.
Since $T,V\in \mathcal{L}$, it holds by Claim~\ref{clm:supportk} that $T$ and $V$ are comparable, or $|T\bigtriangleup V|\le k-1$.
If $T$ and $V$ are comparable, then since $s_{i_p}\in V$ and $s_{j_p}\notin V$ for $p=1,\ldots,k-1$, both $s_{i_p}\notin T$ and $s_{j_p}\in T$ cannot hold at the same time for each $p=1,\ldots,k-1$, which implies $\rho(T)=-\rho_2(T)\le 0$.
Consider the case of $|T\bigtriangleup V|\le k-1$.
Since $s_{i_p}\in V$ and $s_{j_p}\notin V$ for $p=1,\ldots,k-1$, we have $2\cdot \rho_1(T)+\rho_3(T)\le |T\bigtriangleup V|\le k-1$.
Hence, since $\rho_1(T)+\rho_2(T)+\rho_3(T)=k-1$, we have
\begin{align*}
\rho(T)=\rho_1(T)-\rho_2(T)=2\cdot \rho_1(T)+\rho_3(T)-k+1\le 0.
\end{align*}
Thus, $\rho(T)\le 0$ holds for every $T\in \mathcal{L}$.
Since $y$ is a feasible solution of $(\mathrm{D})$, we have
\begin{align*}
\sum_{T\in \mathcal{L}}y_T\chi_T=w.
\end{align*}
Hence, the following holds for $p=1,\ldots,k-1$:
\begin{align*}
w(s_{j_p})-w(s_{i_p})=\sum_{\substack{T\in \mathcal{L}\\s_{i_p}\notin T,s_{j_p}\in T}}y_T-\sum_{\substack{T\in \mathcal{L}\\s_{i_p}\in T,s_{j_p}\notin T}}y_T.
\end{align*}
This implies
\begin{align}
\label{eq:wdiff}
\sum_{1\le p\le k-1}(w(s_{j_p})-w(s_{i_p}))=\sum_{\substack{T\in \mathcal{L}\\s_{i_p}\notin T,s_{j_p}\in T\\1\le p\le k-1}}y_T-\sum_{\substack{T\in \mathcal{L}\\s_{i_p}\in T,s_{j_p}\notin T\\1\le p\le k-1}}y_T&=\sum_{T\in \mathcal{L}}y_T\rho_1(T)-\sum_{T\in \mathcal{L}}y_T\rho_2(T)\notag\\&=\sum_{T\in \mathcal{L}}y_T\rho(T)<0.
\end{align}
The last inequality follows from $y_T\rho(T)\le 0$ for every $T\in \mathcal{L}$ and $y_V\rho(V)< 0$.
If $i_p>j_p$ for $p=1,\ldots,k-1$, then $w(s_{j_p})\ge w(s_{i_p})$ holds for every $p=1,\ldots,k-1$, contradicting (\ref{eq:wdiff}).
Hence, $i_p<j_p$ holds for some $1\le p\le k-1$.
Then we have 
\begin{align*}
&U_{i_p}\setminus V\subseteq \{s_{j_{p+1}},s_{j_{p+2}},\ldots,s_{j_{k-1}}\},\\
&V\setminus U_{i_p}= \{s_{i_1},s_{i_2},\ldots,s_{i_{p-1}}\}.
\end{align*}
This implies $|V\bigtriangleup U_{i_p}|\le k-2$, contradicting $\min\{|V\bigtriangleup U_i|\mid 0\leq i\leq n\}\geq k-1$.
\end{proof}

Consider the following linear program restricted to $\mathcal{C}$:
\begin{align}
(\mathrm{D}')\ \ \ \ \ \mathrm{minimize}&\ \ \ \sum_{T\in \mathcal{C}}y_T f(T)\notag\\
    \mathrm{subject\ to}&\ \ \ y\in \mathbb{R}_+^{\mathcal{C}},\notag\\&\ \ \ \sum_{T\in \mathcal{C}}y_T\chi_T=w.\notag
\end{align}
The following corollary immediately follows from Lemma \ref{lem:chaink}.
\begin{cor}
\label{cor:restrictedc}
Let $y\in \mathbb{R}_+^{\mathcal{C}}$ be an optimal solution of $(\mathrm{D}')$.
Define $z\in \mathbb{R}_+^{2^S}$ as $z_T=y_T$ for $T\in \mathcal{C}$, and $z_T=0$ for $T\in 2^S\setminus \mathcal{C}$.
Then $z$ is an optimal solution of $(\mathrm{D})$.
\end{cor}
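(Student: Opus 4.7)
The corollary is essentially a reformulation of Lemma \ref{lem:chaink} in the language of the restricted program $(\mathrm{D}')$, so my plan is a short three-step argument with no substantive obstacle.

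First I would verify that the extended vector $z$ is feasible for $(\mathrm{D})$. Nonnegativity is immediate from $y\in \mathbf{R}_+^{\mathcal{C}}$ together with $z_T=0$ for $T\notin \mathcal{C}$. The equality constraint $\sum_{T\subseteq S}z_T\chi_T=w$ collapses to $\sum_{T\in \mathcal{C}}y_T\chi_T=w$, which holds because $y$ is feasible for $(\mathrm{D}')$. By the same vanishing of the $T\notin \mathcal{C}$ coordinates, the objective values match: $\sum_{T\subseteq S}z_Tf(T)=\sum_{T\in \mathcal{C}}y_Tf(T)$.

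Second I would compare with the optimal value of $(\mathrm{D})$. By Lemma \ref{lem:chaink}, there exists an optimal solution $y^\ast$ of $(\mathrm{D})$ whose support is contained in $\mathcal{C}$. The restriction $y^\ast|_{\mathcal{C}}$ is then feasible for $(\mathrm{D}')$ and has the same objective value as $y^\ast$ in $(\mathrm{D})$. Since $y$ is optimal for $(\mathrm{D}')$, its objective is at most that of $y^\ast|_{\mathcal{C}}$, hence at most the optimal value of $(\mathrm{D})$. Combining this with the feasibility and objective-matching established in the previous step, the vector $z$ is a feasible solution of $(\mathrm{D})$ attaining the optimal value, so $z$ is optimal.

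The argument is straightforward because all the substantive work was done in Lemma \ref{lem:chaink}: once we know that some optimal dual solution is supported in $\mathcal{C}$, the bijection between such solutions and feasible solutions of $(\mathrm{D}')$ (via extension by zeros and restriction to $\mathcal{C}$) is objective-preserving, so optima correspond to optima. The only minor point to watch is that $y$ need not coincide with the restriction of any particular $y^\ast$ produced by Lemma \ref{lem:chaink}, so the comparison must be made at the level of objective values rather than of the vectors themselves.
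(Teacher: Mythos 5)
Your argument is correct and is exactly the routine extension/restriction reasoning the paper has in mind when it says the corollary ``immediately follows'' from Lemma~\ref{lem:chaink}; you have simply spelled out the feasibility check, the objective-value match, and the comparison with the optimum of $(\mathrm{D})$ via the restriction of the optimal solution guaranteed by that lemma. No gap; same approach as the paper, just with the omitted details filled in.
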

Consider the following linear program, which is the dual of $(\mathrm{D}')$:
\begin{align}
(\mathrm{P}')\ \ \ \ \ \mathrm{maximize}&\ \ \ w^\top x\notag\\
\mathrm{subject\ to}&\ \ \ x\in P_{\mathcal{C}}(f),\notag
\end{align}
where $P_{\mathcal{C}}(f)=\{x\in \mathbb{R}^S\mid \ x(T)\leq f(T)\ \mathrm{for\ every\ }T\in \mathcal{C}\}$. 
Let $x^*$ be an extreme point optimal solution of $(\mathrm{P}')$.
We show the following key lemma:

\begin{lem}
\label{lem:extreme_monotonek}
Suppose that $w(s_1)>w(s_2)>\cdots >w(s_n)$.
Then $x^*\in P(f)$.
\end{lem}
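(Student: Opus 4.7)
The plan is to combine LP duality between $(\mathrm{P}')$ and $(\mathrm{D}')$ with complementary slackness and $k$-distant submodularity, in order to show that the extreme point $x^*$ automatically satisfies every constraint $x(T) \le f(T)$ for $T \subseteq S$, not merely those for $T \in \mathcal{C}$.

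First, I would invoke LP duality to get an optimal dual solution $y^* \in \mathbf{R}_+^{\mathcal{C}}$ of $(\mathrm{D}')$, chosen to minimize $\sum_{T \in \mathcal{C}} y^*_T |T||S \setminus T|$. Let $\mathcal{L}^* = \{T \in \mathcal{C} : y^*_T > 0\}$ be its support. Complementary slackness gives $x^*(T) = f(T)$ for every $T \in \mathcal{L}^*$. An uncrossing argument analogous to Claim~\ref{clm:supportk} (checking that the uncrossed sets remain in $\mathcal{C}$, which should hold since $\mathcal{C}$ is closed under the relevant operations near a chain element) would show that every pair $X,Y \in \mathcal{L}^*$ is either comparable or satisfies $|X \Delta Y| \le k-1$. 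Moreover, the dual feasibility $\sum_{T \in \mathcal{L}^*} y^*_T \chi_T = w$ combined with the strict ordering $w(s_1) > \cdots > w(s_n)$ should force $\mathcal{L}^*$ to be rich enough to separate every consecutive pair $s_i, s_{i+1}$: intuitively, $\mathcal{L}^*$ must span the chain $S_0 \subsetneq S_1 \subsetneq \cdots \subsetneq S_n$ in the sense of being close to it in symmetric difference.

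The key tool for the main argument is the modular identity
\[
x^*(T) + x^*(T') = x^*(T \cup T') + x^*(T \cap T'),
\]
which holds for any $T, T' \subseteq S$ and any $x^* \in \mathbf{R}^S$. If I can pick $T' \in \mathcal{L}^*$ with $|T \Delta T'| \ge k$, then $k$-distant submodularity yields $f(T) + f(T') \ge f(T \cup T') + f(T \cap T')$, and combining this with $x^*(T') = f(T')$ and inductive hypotheses $x^*(T \cup T') \le f(T \cup T')$ and $x^*(T \cap T') \le f(T \cap T')$ gives
\[
x^*(T) = x^*(T \cup T') + x^*(T \cap T') - f(T') \le f(T \cup T') + f(T \cap T') - f(T') \le f(T),
\]
which is the desired conclusion. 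So the proof reduces to setting up a well-founded induction on some measure such as $\min_i |T \Delta S_i|$ (or a lexicographic refinement of it) and showing that for every $T$ outside $\mathcal{C}$ there exists such a $T' \in \mathcal{L}^*$ making both $T \cup T'$ and $T \cap T'$ strictly smaller under this measure.

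The main obstacle will be executing this inductive step cleanly: I need to guarantee, for every $T$ with $\min_i |T \Delta S_i| \ge k-1$, the existence of a witness $T' \in \mathcal{L}^*$ that (i) has $|T \Delta T'| \ge k$ so that $k$-distant submodularity applies, and (ii) brings $T \cup T'$ and $T \cap T'$ closer to the chain, so the induction terminates. The delicate cases are the boundary ones where $T$ is almost in $\mathcal{C}$ or where the near-chain structure of $\mathcal{L}^*$ leaves few candidates for $T'$; here one must leverage the strict monotonicity of $w$ together with Claim~\ref{clm:supportk}-type structural constraints on $\mathcal{L}^*$ to exhibit a suitable $T'$, mirroring the index-swapping reasoning ($i_p$ versus $j_p$) already used in the proof of Lemma~\ref{lem:chaink}.
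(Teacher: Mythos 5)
Your proposal has a genuine gap at its core: the inductive step is only conjectured, not proved. You reduce the lemma to the claim that for every $T\notin\mathcal{C}$ there exists a witness $T'\in\mathcal{L}^*$ with $|T\Delta T'|\ge k$ (so that $k$-distant submodularity applies), with $x^*(T')=f(T')$, and with both $T\cup T'$ and $T\cap T'$ strictly smaller under your measure $\min_i|T\Delta S_i|$. You explicitly flag this as ``the main obstacle,'' and it is exactly the part that carries the difficulty: the support $\mathcal{L}^*$ may be very sparse (strict monotonicity of $w$ only forces, for each $i$, some member containing $s_i$ but not $s_{i+1}$), and the two requirements on $T'$ pull in opposite directions --- a $T'$ whose union and intersection with $T$ land closer to the chain tends to be close to $T$, while applying the submodular inequality needs $|T\Delta T'|\ge k$. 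No argument is given that such a $T'$ always exists, and none of the boundary cases you mention is handled. A secondary but real flaw is the preliminary uncrossing inside $(\mathrm{D}')$: your parenthetical claim that $\mathcal{C}$ is closed under the relevant unions and intersections is false in general (e.g.\ for $k=3$, $X=\{s_2,s_3\}=S_3\Delta\{s_1\}$ and $Y=\{s_5\}=S_0\Delta\{s_5\}$ lie in $\mathcal{C}$ but $X\cup Y=\{s_2,s_3,s_5\}\notin\mathcal{C}$), so the uncrossed vector need not be feasible for $(\mathrm{D}')$. This part is repairable --- the proof of Lemma~\ref{lem:chaink} already produces an optimal solution of $(\mathrm{D})$ whose support is both near-chain and contained in $\mathcal{C}$, and its restriction is optimal for $(\mathrm{D}')$ --- but the missing witness-finding induction is not.

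For contrast, the paper proves the lemma by an entirely different route that avoids any such combinatorial search: it takes the tight family $\mathcal{F}=\{T\in\mathcal{C}\mid x^*(T)=f(T)\}$, perturbs an optimal dual of $(\mathrm{D}')$ by putting a tiny positive weight $\delta/((n+1)n^k)$ on all of $\mathcal{F}\setminus\mathcal{L}$, checks that the resulting weight $w^*$ is still strictly decreasing in the same order (so Lemma~\ref{lem:chaink} and Corollary~\ref{cor:restrictedc} apply with $w^*$), and then uses complementary slackness for the \emph{full} pair $(\mathrm{P})$, $(\mathrm{D})$ with weight $w^*$ to conclude that every optimal solution of $(\mathrm{P})$ is tight on all of $\mathcal{F}$ and hence coincides with the extreme point $x^*$, giving $x^*\in P(f)$. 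If you want to salvage your approach, you would need to actually establish the witness existence and measure decrease; as written, the proof is incomplete.
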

\begin{proof}
Let $y$ be an optimal solution of $(\mathrm{D}')$.
Let $\mathcal{F}=\{T\in \mathcal{C}\mid x^*(T)=f(T)\}$ and $\mathcal{L}=\{T\in \mathcal{C}\mid y_T>0\}$.
By complementary slackness, we have $\mathcal{L}\subseteq \mathcal{F}$.
Define $\delta=\min\{w(s_i)-w(s_{i+1})\mid 1\le i\le n-1\}$.
Note that $\delta>0$ because $w(s_1)>w(s_2)>\cdots >w(s_n)$.
Let $y^*\in \mathbb{R}^{\mathcal{C}}_+$ be a vector defined as follows:
\begin{align*}
y^*_T=\left\{\begin{array}{lll}
    y_T & \mathrm{if\ }T\in \mathcal{L},\\
    \dfrac{\delta}{(n+1)\cdot n^k} & \mathrm{if\ }T\in \mathcal{F}\setminus \mathcal{L},\\
    0 & \mathrm{if\ }T\in \mathcal{C}\setminus \mathcal{F}.
\end{array} \right.
\end{align*}
Let $w^*=\sum_{T\in \mathcal{C}}y^*_T\chi_T$.
Recall that we assume $k\le |S|=n$.
Since
\begin{align*}
|\mathcal{C}|\le (n+1)\cdot \sum_{0\le i\le k-2}\binom{n}{i}\le (n+1)\cdot n^{k-2}\cdot (k-1)\le (n+1)\cdot n^{k-1}
\end{align*}
and $\delta>0$, we have
\begin{align*}
w^*(s_i)-w^*(s_{i+1})\ge w(s_i)-w(s_{i+1})-\dfrac{\delta}{(n+1)\cdot n^k}\cdot |\mathcal{C}|\ge \delta-\dfrac{\delta}{n}>0 
\end{align*}
for each $i=1,\ldots,n-1$.
Hence, we have $w^*(s_1)>w^*(s_2)>\cdots >w^*(s_n)$.
Let $(\mathrm{P}^*)$ and $(\mathrm{D}^*)$ be linear programs obtained by replacing $w$ with $w^*$ in $(\mathrm{P}')$ and $(\mathrm{D}')$, respectively.
By complementary slackness, $x^*$ and $y^*$ are optimal solutions of $(\mathrm{P}^*)$ and $(\mathrm{D}^*)$, respectively.
Define $z\in \mathbb{R}^{2^S}_+$ as $z_T=y^*_T$ if $T\in \mathcal{C}$, and $z_T=0$ otherwise.
Then by Corollary \ref{cor:restrictedc}, $z$ is also an optimal solution of $(\mathrm{D})$ with $w=w^*$.
Hence, by complementary slackness any optimal solution $x$ of $(\mathrm{P})$ with $w=w^*$ satisfies $x(T)=f(T)$ for every $T\in \mathcal{F}$.
This implies $x^*=x\in P(f)$.
\end{proof}

By Lemma \ref{lem:extreme_monotonek}, we obtain the following theorem:
\begin{thm}
\label{thm:kpolynomial}
Suppose that $f$ is integer-valued and $w$ is rational. Then the linear program $(\mathrm{P})$ can be solved in time polynomial in $|S|^k$.
\end{thm}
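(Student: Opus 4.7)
The plan is to combine the size bound on $\mathcal{C}$ with Lemma \ref{lem:extreme_monotonek}, after handling the strict-monotonicity assumption via a sufficiently small perturbation of $w$.

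First, I would observe that the counting argument already appearing in the proof of Lemma \ref{lem:extreme_monotonek} gives $|\mathcal{C}|\le (n+1)\cdot n^{k-1}$, so $(\mathrm{P}')$ is a linear program with rational data of size polynomial in $|S|^k$ and $\log B$; indeed, every coefficient $f(T)$ with $T\in \mathcal{C}$ can be written down in $\log B$ bits. Hence $(\mathrm{P}')$ admits, in time polynomial in $|S|^k$ and $\log B$, an extreme point optimal solution $x^*$ via standard LP machinery (e.g.\ the ellipsoid method of \cite{grotschel1981}). The dimensions make this routine, so I would not dwell on this step.

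Next, to invoke Lemma \ref{lem:extreme_monotonek} I need the weights to be strictly decreasing. I would perturb as follows: reorder $S$ so that $w(s_1)\ge w(s_2)\ge\cdots\ge w(s_n)$ and define $\tilde w(s_i)=w(s_i)+\epsilon_i$, where $\epsilon_1>\epsilon_2>\cdots>\epsilon_n>0$ are rationals chosen so that $\tilde w(s_1)>\cdots>\tilde w(s_n)$ and $\max_i \epsilon_i$ is smaller than a threshold $\eta$ to be described below. Solving $(\mathrm{P}')$ with $\tilde w$ yields an extreme point $\tilde x$ which, by Lemma \ref{lem:extreme_monotonek}, lies in $P(f)$. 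Because $P(f)\subseteq P_{\mathcal{C}}(f)$, it follows that $\tilde x$ is optimal for $(\mathrm{P})$ with weight vector $\tilde w$.

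The main obstacle is showing that $\tilde x$ is also optimal for $(\mathrm{P})$ with the original $w$, so I would spend the bulk of the argument bounding $\eta$. Any extreme point $x$ of $P_{\mathcal{C}}(f)$ is determined by a system of $n$ linearly independent tight constraints $x(T)=f(T)$ with $T\in \mathcal{C}$, each having integer right-hand side of absolute value at most $B$; Cramer's rule thus writes every coordinate of $x$ as $p/q$ with $|p|,|q|$ bounded by $n!\cdot B$. Consequently, if $x,x'$ are extreme points of $P_{\mathcal{C}}(f)$ with $w^\top x\ne w^\top x'$, their values differ by at least $1/Q$ for some explicit $Q$ bounded by a polynomial in $B$ and $n^{O(n)}$ (expressible in $\mathrm{poly}(|S|^k,\log B)$ bits). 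Choosing each $\epsilon_i$ to be a rational in $(0,\eta)$ with $\eta\cdot\sum_i |x_i|<1/(2Q)$ for every extreme point $x$ of $P_{\mathcal{C}}(f)$, we guarantee that the ranking of extreme points by $\tilde w$ refines the ranking by $w$, hence any $\tilde w$-maximizer among the extreme points is also a $w$-maximizer. Then $\tilde x$ is optimal for $(\mathrm{P})$ with $w$, completing the reduction. All arithmetic with these perturbations remains polynomial in $|S|^k$ and $\log B$, yielding the claimed running time.
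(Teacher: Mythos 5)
Your proposal is correct and is essentially the paper's own argument: perturb $w$ by explicit, polynomially-bounded rationals to make the weights strictly decreasing, solve the restricted program $(\mathrm{P}')$ and invoke Lemma \ref{lem:extreme_monotonek} to land in $P(f)$, then use Cramer's-rule determinant bounds to show the perturbation is smaller than the minimum objective gap between extreme points, so optimality transfers back to the original $w$. The only differences are cosmetic — you perturb every coordinate rather than only tied ones, and you phrase the transfer via extreme points of $P_{\mathcal{C}}(f)$ instead of $P(f)$ — plus a minor wording slip: the gap denominator $Q$ should be bounded in terms of the denominators of $w$ (the paper's $B_w$), not the bound $B$ on $|f|$.
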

\begin{proof}

Let $B_w$ be an upper bound on the denominators of the absolute values of $w(s_1),\ldots,w(s_n)$.
Let $B_f$ be an upper bound on the absolute values of $f$.
Define $\epsilon>0$ as follows:
\begin{align*}
\epsilon=\dfrac{1}{4n^2}\cdot \left(\dfrac{1}{n!}\right)^3\cdot \left(\dfrac{1}{B_w}\right)^{n}\cdot \dfrac{1}{B_f}.
\end{align*}
Define $w_\epsilon\in \mathbb{R}^S_+$ as follows:
\begin{align*}
w_\epsilon(s_i)=\begin{cases}
    w(s_i)+\epsilon\cdot (n-i) & \mathrm{if\ }1\le i\le n-1\ \mathrm{and\ }w(s_i)=w(s_{i+1}),\\
    w(s_i) & \mathrm{otherwise}.
\end{cases}
\end{align*}
Then, we have $w_\epsilon(s_i)>w_\epsilon(s_{i+1})$ for $i=1,\ldots,n-1$.
Indeed, if $w_\epsilon(s_i)=w(s_i)+\epsilon\cdot (n-i)$, then $w_\epsilon(s_i)=w(s_i)+\epsilon\cdot (n-i)>w(s_{i+1})+\epsilon\cdot (n-i-1)\ge w_\epsilon(s_{i+1})$, as required.
Consider the case when $w_\epsilon(s_i)=w(s_i)$.
Since $w(s_i)>w(s_{i+1})$, we have
\begin{align*}
w(s_i)\ge w(s_{i+1})+\dfrac{1}{{B_w}^2}.
\end{align*}
Hence,
\begin{align*}
w_\epsilon(s_i)=w(s_i)\ge w(s_{i+1})+\dfrac{1}{{B_w}^2}> w(s_{i+1})+\epsilon\cdot (n-i-1)\ge w_\epsilon(s_{i+1}),
\end{align*}
as required.

Let $x^*$ be an extreme point optimal solution of $(\mathrm{P}')$ with $w=w_\epsilon$.
Since $w_\epsilon(s_1)>w_\epsilon(s_2)>\cdots >w_\epsilon(s_n)$, $x^*$ is also an optimal solution of $(\mathrm{P})$ with $w=w_\epsilon$ by Lemma \ref{lem:extreme_monotonek}.
Suppose that $x^*$ is further an optimal solution of the original linear program $(\mathrm{P})$.
Since $|\mathcal{C}|\le 2n^k$ by the proof of Lemma \ref{lem:extreme_monotonek}, $x^*$ can be computed in time polynomial in $|S|^k$ by the ellipsoid method in \cite{tardos1986}.
Hence, $(\mathrm{P})$ can also be solved in time polynomial in $|S|^k$ by computing $x^*$.
Therefore, it remains to show that $x^*$ is an optimal solution of $(\mathrm{P})$.
Let $x_1,\ldots,x_m$ be extreme point optimal solutions of $(\mathrm{P})$, and $x'$ another extreme point of $P(f)$.
For $i=1,\ldots,m$ and $j=1,\ldots,n$, by Cramer's rule we have
\begin{align}
\label{eq:cramer}
w_\epsilon(s_j)(x_i(s_j)-x'(s_j))&\ge w(s_j)(x_i(s_j)-x'(s_j))-\epsilon n\cdot \left|x_i(s_j)-x'(s_j)\right|\notag\\&\ge w(s_j)(x_i(s_j)-x'(s_j))-\epsilon n\cdot \left|x_i(s_j)\right|-\epsilon n\cdot \left|x'(s_j)\right|\notag\\&\ge w(s_j)(x_i(s_j)-x'(s_j))-2\epsilon n\cdot n!\cdot B_f.
\end{align}
Since $x'$ is not an optimal solution of $(\mathrm{P})$, we have $w^{\top}x_i>w^{\top}x'$ for $i=1,\ldots,m$.
Hence, by Cramer's rule the following holds for $i=1,\ldots,m$:
\begin{align}
\label{eq:cramer2}
w^{\top}x_i-w^{\top}x'\ge \left(\dfrac{1}{n!}\right)^2\cdot  \left(\dfrac{1}{B_w}\right)^{n}.
\end{align}
By (\ref{eq:cramer}) and (\ref{eq:cramer2}), for each $i=1,\ldots,m$ we have
\begin{align*}
w_\epsilon^{\top}x_i-w_\epsilon^{\top}x'&\ge w^{\top}x_i-w^{\top}x'-2\epsilon n^2\cdot n!\cdot B_f\\&\ge \left(\dfrac{1}{n!}\right)^2\cdot  \left(\dfrac{1}{B_w}\right)^{n}-2\epsilon n^2\cdot n!\cdot B_f\\&=\left(\dfrac{1}{n!}\right)^2\cdot \left(\dfrac{1}{B_w}\right)^{n}-\dfrac{1}{2}\cdot \left(\dfrac{1}{n!}\right)^2\cdot\left(\dfrac{1}{B_w}\right)^{n}\\&=\dfrac{1}{2}\cdot \left(\dfrac{1}{n!}\right)^2\cdot\left(\dfrac{1}{B_w}\right)^{n}>0.
\end{align*}
This implies that every extreme point optimal solution of $(\mathrm{P})$ with $w=w_\epsilon$ is one of $x_1,\ldots,x_m$.
Hence, $x^*=x_i$ holds for some $1\le i\le m$.
Therefore, $x^*$ is an extreme point optimal solution of $(\mathrm{P})$.
\end{proof}

\section{Applications}
\label{sec:application}
In this section, we present two applications of our result.
The first one is a complexity dichotomy of the $p/q$-submodular function minimization problem \cite{mizutani2024,berczi2008} for general integers $p$ and $q$.
The second one is a polynomial algorithm for the weighted matroid intersection problem under the minimum rank oracle \cite{berczi2024,berczi2023res} for some class of matroids.

\subsection{A complexity dichotomy for $p/q$-submodular function minimization}

Let $S$ be a finite set and $p,q$ fixed positive integers with $p\le \tbinom{q}{2}$.
A set function $f:2^S\to \mathbb{R}$ is called \textit{$p/q$-submodular} if for every distinct $q$ subsets $S_1,\ldots,S_q\subseteq S$, there exist at least $p$ pairs $(i_1,j_1),\ldots,(i_p,j_p)\in \{(i,j)\mid 1\le i<j\le q\}$ such that
\begin{align*}
f(S_{i_m})+f(S_{j_m})\ge f(S_{i_m}\cup S_{j_m})+f(S_{i_m}\cap S_{j_m})
\end{align*}
for $m=1,\ldots,p$.
Note that for the case of $q=2$, 1/2-submodular functions exactly coincide with the class of submodular functions, and can be minimized in polynomial time \cite{grotschel1981,iwata2001,schrijver2000}.
For the case of $q=3$, Mizutani and Yoshida \cite{mizutani2024} showed that 2/3-submodular functions can be minimized in polynomial time in the value oracle model, while B{\'{e}}rczi and Frank \cite{berczi2008} proved that it requires an exponential number of evaluation oracle calls to minimize 1/3-submodular functions.
We will provide a complexity dichotomy theorem for minimizing $p/q$-submodular functions for general $q\ge 3$ with the aid of Theorem \ref{thm:main}.
We first prove the following lemma:
\begin{lem}
\label{lem:pq-submo}
For a positive integer $q\ge 3$, any $p/q$-submodular function is $2q-3$-distant submodular if $p\ge \tbinom{q-1}{2}+1$.
\end{lem}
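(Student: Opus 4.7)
The plan is to argue by contradiction and exploit the bound that in any $q$-tuple of distinct subsets, at most $\binom{q}{2} - p \le q - 2$ pairs can violate the submodular inequality (since $p \ge \binom{q-1}{2} + 1$). Suppose, for contradiction, that some $X, Y \subseteq S$ with $|X \Delta Y| \ge 2q - 3$ has positive defect $d(X, Y) := f(X \cup Y) + f(X \cap Y) - f(X) - f(Y) > 0$. It then suffices to construct $q - 2$ additional sets $Z_1, \ldots, Z_{q-2}$, distinct from each other and from $X, Y$, so that at least $q - 1$ pairs in the $q$-tuple $\{X, Y, Z_1, \ldots, Z_{q-2}\}$ have positive defect.

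Using $|X \Delta Y| \ge 2q - 3 = 2(q-2) + 1$, by pigeonhole and the symmetric roles of $X$ and $Y$, I may assume $|Y \setminus X| \ge q - 1$. I would pick $q - 2$ distinct elements $y_1, \ldots, y_{q-2} \in Y \setminus X$ and define $Z_i := X \cup \{y_i\}$. Each $Z_i$ strictly lies between $X$ and $X \cup Y$, so the $Z_i$'s are pairwise distinct and different from both $X$ and $Y$; moreover, every pair $(X, Z_i)$ is comparable and hence automatically satisfies the submodular inequality. A direct expansion of unions and intersections yields the key identity
\[
d(Y, Z_i) = d(X, Y) - d(X, (X \cap Y) \cup \{y_i\}).
\]
Thus, if each auxiliary defect $d(X, (X \cap Y) \cup \{y_i\})$ is non-positive, then $d(Y, Z_i) \ge d(X, Y) > 0$, and every pair $(Y, Z_i)$ also fails the submodular inequality; combined with $(X, Y)$, this produces the required $q - 1$ failing pairs, a contradiction.

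The main obstacle is guaranteeing that the auxiliary pairs $(X, (X \cap Y) \cup \{y_i\})$ satisfy the submodular inequality, since their symmetric difference is $|X \setminus Y| + 1$, which may be strictly smaller than $2q - 3$ and so lie outside the scope of the lemma being proved. I plan to handle this via a minimum-counterexample argument: take $(X, Y)$ to minimize, say, $|Y|$ among all counterexamples, and observe that $(X \cap Y) \cup \{y_i\}$ is a proper subset of $Y$, so the auxiliary pair cannot itself be a counterexample of equal or smaller minimality. In the regime $|X \setminus Y| < 2q - 4$, a symmetric construction with $Z_i := X \setminus \{x_i\}$ for $x_i \in X \setminus Y$ — which produces auxiliary pairs of symmetric difference $|Y \setminus X| + 1$ — may be used instead; and in the tightest boundary case $|X \Delta Y| = 2q - 3$, the $Z_i$'s may be taken of the form $X \cup B_i$ with larger subsets $B_i \subseteq Y \setminus X$ (chosen so that $|X \setminus Y| + |B_i| \ge 2q - 3$), exploiting the identity $d(Y, X \cup B_i) = d(X, Y) - d(X, (X \cap Y) \cup B_i)$ in exactly the same way. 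A case analysis on the relative sizes of $|X \setminus Y|$ and $|Y \setminus X|$, together with the minimum-counterexample hypothesis, should exhaust all situations.
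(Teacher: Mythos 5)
Your construction of the intermediate sets $X\cup\{y_i\}$ and the telescoping identity $d(Y,Z_i)=d(X,Y)-d\bigl(X,(X\cap Y)\cup\{y_i\}\bigr)$ are exactly the right ingredients, but the mechanism you use to control the auxiliary defect does not work, and this is a genuine gap. A minimal-counterexample (or induction) hypothesis only constrains pairs that are \emph{in scope}, i.e.\ pairs with symmetric difference at least $2q-3$; the auxiliary pair $\bigl(X,(X\cap Y)\cup\{y_i\}\bigr)$ has symmetric difference $|X\setminus Y|+1$, which is typically smaller than $2q-3$, so ``it is not a smaller counterexample'' is vacuously true and tells you nothing about the sign of its defect. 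A $p/q$-submodular function may perfectly well have $d\bigl(X,(X\cap Y)\cup\{y_i\}\bigr)>0$ for every $i$. Your proposed patches do not close this hole in the tight case $|X\Delta Y|=2q-3$ with both $|X\setminus Y|\ge 2$ and $|Y\setminus X|\ge 2$ (possible for every $q\ge 4$, e.g.\ $q=4$, $|X\setminus Y|=2$, $|Y\setminus X|=3$): the first construction gives auxiliary symmetric difference $|X\setminus Y|+1<2q-3$, the symmetric one gives $|Y\setminus X|+1<2q-3$, and choosing larger $B_i\subseteq Y\setminus X$ with $|X\setminus Y|+|B_i|\ge 2q-3$ forces $B_i=Y\setminus X$, so that $Z_i=X\cup Y$ degenerates (only one such set, and its auxiliary pair is $(X,Y)$ itself, giving $d(Y,X\cup Y)=0$). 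So the concluding ``case analysis should exhaust all situations'' is not substantiated, and the base/boundary case of the intended induction is precisely where it fails.

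The paper closes this differently, and the contrast is instructive: it never needs any auxiliary pair to be submodular. It forms \emph{both} families of intermediate sets, $X_i\in\{(X\cap Y)\cup\{x_i\},\,X\cup\{y_j\}\}$ to be paired with $Y$, and $Y_i\in\{Y\cup\{x_i\},\,(X\cap Y)\cup\{y_j\}\}$ to be paired with $X$, where $l=|X\setminus Y|$, $m=|Y\setminus X|$. Applying $p/q$-submodularity to $Y$ together with any $q-1$ of the $X_i$ (and using $p\ge\tbinom{q-1}{2}+1$, so at least one submodular pair must involve $Y$) shows at most $q-2$ of the $l+m$ sets $X_i$ violate with $Y$; symmetrically at most $q-2$ of the $Y_i$ violate with $X$. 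Since $l+m\ge 2q-3>2(q-2)$, some common index $i$ satisfies both inequalities, and adding them makes the auxiliary terms cancel, yielding $f(X)+f(Y)\ge f(X\cup Y)+f(X\cap Y)$ directly. In other words, where you apply the $p/q$ property once to the single $q$-tuple $\{X,Y,Z_1,\ldots,Z_{q-2}\}$ and then need the out-of-scope auxiliary defects to be nonpositive, the paper applies it to many $q$-tuples to double-count violators in the two families and finds one index where both one-element inequalities hold simultaneously; the telescoping you already observed then finishes the proof without any induction.
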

\begin{proof}
Let $f:2^S\to \mathbb{R}$ be a $p/q$-submodular function.
Take $X,Y\subseteq S$ with $|X\bigtriangleup Y|\ge 2q-3$.
Let $X\setminus Y=\{x_1,\ldots,x_l\}$ and $Y\setminus X=\{y_1,\ldots,y_m\}$, where $l=|X\setminus Y|$ and $m=|Y\setminus X|$.
Let $X_i=(X\cap Y)\cup \{x_i\}$ for $i=1,\ldots,l$, and $X_i=X\cup \{y_{i-l}\}$ for $i=l+1,\ldots,l+m$.
By the definition of $X_i$, we have $X_i\ne X_j$ and $Y\ne X_i$ for any $1\le i\ne j\le l+m$.
Choose any $q-1$ sets $X_{i_1},\dots,X_{i_{q-1}}$ from $X_1,\ldots,X_{l+m}$.
Then $X_{i_1},\dots,X_{i_{q-1}},Y$ are distinct $q$ sets.
Since $f$ is $p/q$-submodular and $p\ge \tbinom{q-1}{2}+1$, we have $f(X_{i_j})+f(Y)\ge f(X_{i_j}\cup Y)+f(X_{i_j}\cap Y)$ for some $1\le j\le q-1$.
Hence, the number of sets $X_i$ satisfying $f(X_i)+f(Y)<f(X_i\cup Y)+f(X_i\cap Y)$ is at most $q-2$.
This implies that $f(X_i)+f(Y)\ge f(X_i\cup Y)+f(X_i\cap Y)$ holds for at least $l+m-q+2$ sets of $X_i$.
Let $Y_i=Y\cup \{x_i\}$ for $i=1,\ldots,l$, and $Y_i=(X\cap Y)\cup \{y_{i-l}\}$ for $i=l+1,\ldots,l+m$.
By the definition of $Y_i$, we have $Y_i\ne Y_j$ and $X\ne Y_i$ for any $1\le i\ne j\le l+m$.
Hence, by the same argument as above, we have $f(X)+f(Y_i)\ge f(X\cup Y_i)+f(X\cap Y_i)$ for at least $l+m-q+2$ sets of $Y_i$.
Since $l+m\ge 2q-3$ by the definition of $l$ and $m$, we have $2(l+m-q+2)\ge l+m+2q-3-2q+4>l+m$.
Hence, for some $1\le i\le l+m$ we have
\begin{align}
\label{eq:pq}
&f(X_i)+f(Y)\ge f(X_i\cup Y)+f(X_i\cap Y)\ \mathrm{and}\notag\\
&f(X)+f(Y_i)\ge f(X\cup Y_i)+f(X\cap Y_i).
\end{align}
If $1\le i\le l$, then (\ref{eq:pq}) implies
\begin{align*}
&f((X\cap Y)\cup \{x_i\})+f(Y)\ge f(Y\cup \{x_i\})+f(X\cap Y)\ \mathrm{and}\\
&f(X)+f(Y\cup \{x_i\})\ge f(X\cup Y)+f((X\cap Y)\cup \{x_i\}).
\end{align*}
By adding these inequalities, we obtain $f(X)+f(Y)\ge f(X\cup Y)+f(X\cap Y)$, as desired.
If $l+1\le i\le l+m$, then (\ref{eq:pq}) implies
\begin{align*}
&f(X\cup \{y_{i-l}\})+f(Y)\ge f(X\cup Y)+f((X\cap Y)\cup \{y_{i-l}\})\ \mathrm{and}\\
&f(X)+f((X\cap Y)\cup \{y_{i-l}\})\ge f(X\cup \{y_{i-l}\})+f(X\cap Y).
\end{align*}
By adding these inequalities, we obtain $f(X)+f(Y)\ge f(X\cup Y)+f(X\cap Y)$, as desired.
\end{proof}

By Lemma \ref{lem:pq-submo} and Theorem \ref{thm:main}, integer-valued $p/q$-submodular functions can be minimized in polynomial time for fixed positive integers $p,q$ with $q\ge 3$ and $p\ge \tbinom{q-1}{2}+1$.
To show the hardness of the other cases, we define a set function $f_T:2^S\to \mathbb{Z}$ for each $T\subseteq S$ as follows:
\begin{align*}
f_T(X)=\begin{cases}
    -1 & \mathrm{if}\ X=T,\\
    0 & \mathrm{otherwise}.
\end{cases}
\end{align*}
Since $f_T$ is $p/q$-submodular for positive integers $p,q$ with $q\ge 3$ and $p\le \tbinom{q-1}{2}$, and since it requires an exponential number of evaluation oracle queries to minimize $f_T$, the minimization of $p/q$-submodular functions with $q\ge 3$ and $p\le \tbinom{q-1}{2}$ requires an exponential number of evaluation oracle calls.
Combining these results, we obtain the following theorem:
\begin{thm}
\label{thm:dichotomy}
Let $p$ and $q$ be fixed positive integers with $q\ge 3$ and $p\le \tbinom{q}{2}$. 
Then integer-valued $p/q$-submodular functions can be minimized in polynomial time if $p\ge \tbinom{q-1}{2}+1$.
Otherwise, it requires an exponential number of evaluation oracle calls for the minimization.
\end{thm}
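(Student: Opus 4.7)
The proof plan is to separate the two regimes of $p$ in the theorem statement and handle each directly. The tractable direction is essentially a one-line corollary of the machinery already built, while the hardness direction rests on verifying that the natural ``singleton dip'' function is $p/q$-submodular whenever $p\le\binom{q-1}{2}$ and then invoking a standard adversary argument.

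For the tractable case $p\ge\binom{q-1}{2}+1$, I would apply Lemma~\ref{lem:pq-submo} to conclude that every $p/q$-submodular function is $(2q-3)$-distant submodular. Since $q$ is fixed, so is $k=2q-3$, and Theorem~\ref{thm:main} then yields a polynomial-time minimization algorithm; the only point to double-check is that the upper bound $B$ on $|f|$ used in Theorem~\ref{thm:main} is polynomial in the input size, which follows from the fact that $f$ is integer-valued together with Lemma~\ref{lem:upper}.

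For the hardness case $p\le\binom{q-1}{2}$, the strategy is to pack all the difficulty into a single ``needle in a haystack'' function. I would fix any $T\subseteq S$ and define $f_T$ as in the excerpt, then verify that $f_T$ is $p/q$-submodular. The key observation is a case analysis of when the submodular inequality $f_T(X)+f_T(Y)\ge f_T(X\cup Y)+f_T(X\cap Y)$ can fail: since $f_T$ takes only the values $0$ and $-1$, failure requires exactly one side to equal $-1$, and a short enumeration shows this happens only when one of $X,Y$ equals $T$ while $T$ is incomparable to the other. Consequently, among any $q$ distinct sets $T_1,\dots,T_q$, at most one equals $T$, so at most $q-1$ of the $\binom{q}{2}$ pairs can violate the inequality. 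Hence at least $\binom{q}{2}-(q-1)=\binom{q-1}{2}\ge p$ pairs satisfy it, proving that $f_T$ is $p/q$-submodular.

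With $f_T$ in hand, the hardness follows from a standard adversary argument: the minimizer of $f_T$ is precisely $T$, and an oracle can respond $0$ to every query $X\ne T$, so any deterministic algorithm issuing fewer than $2^{|S|}-1$ queries cannot distinguish among the remaining candidates for $T$, and hence cannot correctly return a minimizer. I expect the only nontrivial step to be the case analysis showing the failure pattern of the submodular inequality for $f_T$; once that is carefully done, both halves of the dichotomy fall out by combining Lemma~\ref{lem:pq-submo}, Theorem~\ref{thm:main}, and the adversary bound.
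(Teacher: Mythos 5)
Your proposal is correct and follows essentially the same route as the paper: the tractable case is exactly Lemma \ref{lem:pq-submo} combined with Theorem \ref{thm:main}, and the hardness case uses the same ``needle'' function $f_T$ together with the standard adversary argument. In fact your case analysis verifying that $f_T$ violates the submodular inequality only for pairs containing $T$ (hence at most $q-1$ of the $\binom{q}{2}$ pairs, leaving $\binom{q-1}{2}\ge p$ satisfied) supplies detail that the paper merely asserts.
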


\subsection{Weighted matroid intersection under the minimum rank oracle}
For $i=1,2$, let $\mathbf{M}_i=(S,\mathcal{I}_i)$ be matroids on the same ground set $S$, whose independent set families and rank functions are denoted by $\mathcal{I}_i$ and $r_i$, respectively. 
The \textit{minimum rank function} of $\mathbf{M}_1$ and $\mathbf{M}_2$ is a set function $r_{\min}:2^S\to \mathbb{Z}$ defined as $r_{\min}(X):=\min\{r_1(X),r_2(X)\}$ for each $X\subseteq S$. 
B{\'{a}}r{\'{a}}sz et al. \cite{barasz2006,berczi2024} showed that a maximum cardinality common independent set of two matroids can be obtained in polynomial time under the \textit{minimum rank oracle}, which returns only $r_{\min}(X)$ instead of $r_1(X)$ and $r_2(X)$ for each $X\subseteq S$.
On the other hand, the weighted matroid intersection problem under the minimum rank oracle has been solved in polynomial time for only few cases including the case when one of the matroids is an elementary split matroid \cite{berczi2023res}, and the case when no circuit in one matroid includes any circuit in the other \cite{berczi2024}, and the case when the maximum size of a circuit in one matroid is bounded by a constant \cite{berczi2024}.
We will prove by Theorem \ref{thm:main} that the weighted matroid intersection problem under the minimum rank oracle is tractable when both of the matroids are close to uniform in some sense.

Suppose that $r_1(S)=r_2(S)$ and let $r:=r_1(S)=r_2(S)$.
For a fixed positive integer $k\le r$, the following lemma holds:

\begin{lem}
\label{lem:min_rank}
For every $X\subseteq S$ and $i=1,2$, suppose that $r_i(X)=|X|$ holds if $|X|\le r-k$, and $r_i(X)=r$ holds if $|X|\ge r+k$.
Then $r_{\min}$ is $4k$-distant submodular.
\end{lem}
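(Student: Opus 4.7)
My plan is a case analysis on the pair $(a,b):=(|X\cap Y|,|X\cup Y|)$ relative to the two thresholds $r-k$ and $r+k$, exploiting the hypothesis $b-a=|X\Delta Y|\ge 4k$. Two boundary cases are immediate: if $b\le r-k$, every set in $\{X,Y,X\cap Y,X\cup Y\}$ lies in the ``free'' regime where $r_i=|\cdot|$, so the inequality becomes modular equality; symmetrically, if $a\ge r+k$ every such set has $r_i\equiv r$. The case $r-k<a\le b<r+k$ is vacuous, since it would force $b-a<2k<4k$. To treat the remaining mixed cases uniformly, I will first establish two cardinality bounds derived from the hypothesis: for every $Z\subseteq S$ and $i\in\{1,2\}$,
\[
r_i(Z)\ \ge\ \min(|Z|,r-k)\qquad\text{and}\qquad r_i(Z)\ \ge\ \min(r,|Z|-k).
\]
The first follows from extracting a subset of $Z$ of size $\min(|Z|,r-k)$ (independent by hypothesis) and using monotonicity; the second from extending $Z$ to a superset of size at least $r+k$ (or taking $S$ itself when $|S|<r+k$) and invoking the unit-gap property $r_i(Z'\cup\{e\})\le r_i(Z')+1$. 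Both bounds survive $\min_i$, so they hold for $r_{\min}$, giving $r_{\min}(Z)=\min(|Z|,r)$ whenever $|Z|\notin(r-k,r+k)$ and $r_{\min}(Z)\ge\max(r-k,|Z|-k)$ in the medium window.

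Given these bounds, the strategy is to localise by the sizes $|X|$ and $|Y|$. Whenever $|X|\le r-k$ (symmetrically for $Y$), both $r_1$ and $r_2$ coincide with $|\cdot|$ on $X$; when additionally $a\le r-k$ and $b\ge r+k$ they also coincide on $X\cap Y$ and $X\cup Y$ respectively. The submodular inequality for each individual $r_i$ then reads $r_i(Y)\ge r_i(X\cup Y)+r_i(X\cap Y)-|X|$, and this transfers to $r_{\min}$ after taking $\min_i$ on both sides. Whenever $|X|\ge r+k$, $r_{\min}(X)=r$ and the inequality reduces to the monotonicity bound $r_{\min}(Y)\ge r_{\min}(X\cap Y)$. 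A short counting argument pitting the $4k$-gap against the width-$2k$ medium window shows that in each of the two ``one-sided'' mixed configurations ($a\le r-k<b<r+k$ or $r-k<a<r+k\le b$), at least one of $|X|,|Y|$ must lie outside the medium window, so one of the two previous observations applies.

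The only genuinely nontrivial situation is the main case $a\le r-k$, $b\ge r+k$ with both $|X|$ and $|Y|$ medium; here $r_{\min}(X\cap Y)=a$, $r_{\min}(X\cup Y)=r$, and the target becomes $r_{\min}(X)+r_{\min}(Y)\ge r+a$. I plan to split on whether $a\le r-2k$ or $a>r-2k$. In the first subcase, $r+a\le 2(r-k)$ and the blanket bound $r_{\min}(\cdot)\ge r-k$ on both $X$ and $Y$ already suffices. In the second, $b\ge a+4k>r+2k$, and $|X|+|Y|=a+b>2r$ forces at least one of $|X|,|Y|$, say $|X|$, to exceed $r$; the bound $r_{\min}(Z)\ge|Z|-k$ on such a $Z$ combines with either the same bound on the other set or with the cruder $r_{\min}\ge r-k$ to close the inequality, using $b\ge r+2k$ to derive $\max(|X|,|Y|)\ge a+2k$ whenever the other set has size at most $r$. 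The main obstacle is precisely this medium/medium subcase, because the pointwise minimum of two submodular functions is generally not submodular and we cannot directly chain the submodular inequalities of $r_1$ and $r_2$; the resolution is that the $4k$-distance is wide enough that either $a$ is pushed deep into the small regime or $b$ is pushed deep into the large regime, keeping one of the cardinality bounds on $r_{\min}$ tight enough to certify the inequality.
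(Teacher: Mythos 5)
Your proposal is correct and takes essentially the same route as the paper: a case analysis against the thresholds $r-k$ and $r+k$ built on the same ingredients --- modular equality when all four sets are small, $r_{\min}\equiv r$ plus monotonicity when one set is large, transferring the individual $r_i$'s submodularity when one set is small, and in the both-medium case the two lower bounds $r_{\min}(Z)\ge r-k$ and $r_{\min}(Z)\ge |Z|-k$ (the latter obtained exactly as in the paper, by extending to a rank-$r$ set of size at most $r+k$) played against the $4k$ gap. The only differences are organizational (you first case on $|X\cap Y|,|X\cup Y|$ instead of $|X|,|Y|$), and the ``additionally $a\le r-k$ and $b\ge r+k$'' clause in your first observation is superfluous: the $\min_i$ transfer already works whenever $r_1(X)=r_2(X)=|X|$, which is exactly what you need in the configuration $a\le r-k<b<r+k$ where that clause fails.
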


\begin{proof}
For $X,Y\subseteq S$, suppose that $|X\bigtriangleup Y|\ge 4k$.
We will show that $r_{\min}(X)+r_{\min}(Y)\ge r_{\min}(X\cup Y)+r_{\min}(X\cap Y)$.
If $|X|\ge r+k$, then 
\begin{align*}
r_{\min}(X)+r_{\min}(Y)=r+r_{\min}(Y)=r+r_{i}(Y)&\ge r_{\min}(X\cup Y)+r_i(X\cap Y)\\&\ge r_{\min}(X\cup Y)+r_{\min}(X\cap Y),
\end{align*}
where $i$ is either $1$ or $2$. 
Hence, it suffices to consider the case when $|X|,|Y|<r+k$.
If $|X|,|Y|\le r-k$, then $r_{\min}(X)+r_{\min}(Y)=|X|+|Y|=|X\cup Y|+|X\cap Y|\ge r_{\min}(X\cup Y)+r_{\min}(X\cap Y)$, as desired.
If $|X|\le r-k<|Y|<r+k$, then 
\begin{align*}
r_{\min}(X)+r_{\min}(Y)&=|X|+r_{\min}(Y)\\&=|X|-r_{\min}(X\cap Y)+r_{\min}(Y)-r_{\min}(X\cup Y)+r_{\min}(X\cup Y)+r_{\min}(X\cap Y)\\&=|X\setminus Y|+r_{\min}(Y)-r_{\min}(X\cup Y)+r_{\min}(X\cup Y)+r_{\min}(X\cap Y)\\&\ge |X\setminus Y|+r_{i}(Y)-r_{i}(X\cup Y)+r_{\min}(X\cup Y)+r_{\min}(X\cap Y)\\&\ge r_{\min}(X\cup Y)+r_{\min}(X\cap Y),
\end{align*}
where $i\in \{1,2\}$ satisfies $r_{\min}(Y)=r_i(Y)$.
Consider the case when $r-k<|X|,|Y|<r+k$.
Since $|X\bigtriangleup Y|\ge 4k$, we may assume without loss of generality that $|X\setminus Y|\ge 2k$.
Take $X\subseteq Z\subseteq S$ with $|Z|\le r+k$ and $r_{\min}(Z)=r$.
Since $|X|= |X\cap Y|+|X\setminus Y|\ge r_{\min}(X\cap Y)+2k$, we have 
\begin{align*}
r_{\min}(X)\ge r_{\min}(Z)-|Z\setminus X|\ge r-(r+k)+|X|= |X|-k\ge r_{\min}(X\cap Y)+k.
\end{align*}
Hence,
\begin{align*}
r_{\min}(X)+r_{\min}(Y)\ge r_{\min}(X\cap Y)+k+r-k&=r+r_{\min}(X\cap Y)\\&\ge r_{\min}(X\cup Y)+r_{\min}(X\cap Y).
\end{align*}
\end{proof}

The class of matroids that satisfy the conditions in Lemma \ref{lem:min_rank} is a subclass of $k$-paving matroids \cite{rajpal1998}, and exactly the class of sparse paving matroids if $k=1$.
Here, a matroid is called \textit{$k$-paving} if all circuits have cardinality more than $r-k$.

Let $w\in \mathbb{Z}^S$ be a weight vector.
Consider the following linear programming formulation of the weighted matroid intersection under the minimum rank oracle:
\begin{align}
(\mathrm{MI})\ \ \ \ \ \mathrm{maximize}&\ \ \ w^\top x\notag\\
    \mathrm{subject\ to}&\ \ \ x\in \mathbb{R}_+^{S},\notag\\&\ \ \ x(T)\le r_{\min}(T),\ \ \ \forall T\subseteq S.\notag
\end{align}
Since the intersection of independent set polytopes of two matroids is the convex hull of the incidence vectors of the common independent sets \cite{edmonds1970}, an extreme point optimal solution of $(\mathrm{MI})$ corresponds to a maximum weight common independent set.
By the polynomial solvability equivalence of the separation and optimization \cite{grotschel1981}, $(\mathrm{MI})$ can be polynomially reduced to the corresponding separation problem.
If $r_{\min}$ is $4k$-distant submodular, then since $f(T):=r_{\min}(T)-x(T)$ for $T\subseteq S$ is also $4k$-distant submodular for any $x\in \mathbb{Q}^S$, the corresponding separation problem can be solved in time polynomial in $|S|^k$ and $\log B$ by Theorem \ref{thm:main}, where $B$ is an upper bound on the absolute values of $w(s)$ for $s\in S$.
Hence, by Lemma \ref{lem:min_rank} we obtain the following theorem:

\begin{thm}
For every $X\subseteq S$ and $i=1,2$, suppose that $r_i(X)=|X|$ holds if $|X|\le r-k$, and $r_i(X)=r$ holds if $|X|\ge r+k$.
Then, a maximum weight common independent set of $\mathbf{M}_1$ and $\mathbf{M}_2$ can be computed in time polynomial in $|S|^k$ and $\log B$ in the minimum rank oracle model.
\end{thm}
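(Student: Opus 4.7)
The plan is to reduce weighted matroid intersection under the minimum rank oracle to the minimization of a $4k$-distant submodular function, and then invoke Theorem \ref{thm:main} via the ellipsoid method. First I would apply Lemma \ref{lem:min_rank} to conclude that, under the hypotheses that $r_i(X)=|X|$ for $|X|\le r-k$ and $r_i(X)=r$ for $|X|\ge r+k$, the minimum rank function $r_{\min}$ is $4k$-distant submodular. By Edmonds' theorem \cite{edmonds1970}, the intersection of the two matroid independent set polytopes equals the convex hull of the incidence vectors of common independent sets, and is described exactly by $x\ge 0$ together with $x(T)\le r_{\min}(T)$ for all $T\subseteq S$; hence an extreme point optimum of the linear program $(\mathrm{MI})$ is the incidence vector of a maximum weight common independent set.

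Next, invoking the polynomial equivalence of separation and optimization of Gr\"{o}tschel, Lov{\'{a}}sz, and Schrijver \cite{grotschel1981}, it suffices to solve the separation problem for $(\mathrm{MI})$ in polynomial time. For a rational point $x\in \mathbf{Q}^S_+$, separation reduces to minimizing $f_x(T):=r_{\min}(T)-x(T)$ over $T\subseteq S$ and reading off the sign of the minimum. Since $T\mapsto x(T)$ is modular, subtracting it preserves every submodular inequality, so $f_x$ inherits $4k$-distant submodularity from $r_{\min}$. An upper bound on $|f_x|$ is polynomial in $|S|$ and $B$, so its logarithm is polynomial in $\log|S|$ and $\log B$, and in particular Theorem \ref{thm:main} applies to $f_x$.

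Applying Theorem \ref{thm:main} then produces a minimizer of $f_x$ using a number of arithmetic operations and minimum rank oracle queries that is polynomial in $|S|^{4k}$ and $\log B$; since $|S|^{4k}=(|S|^k)^4$, this is polynomial in $|S|^k$ and $\log B$. Feeding this separation oracle into the ellipsoid method for $(\mathrm{MI})$ produces an optimal solution of $(\mathrm{MI})$, and the standard extreme-point recovery of Gr\"{o}tschel, Lov{\'{a}}sz, and Schrijver yields the incidence vector of a maximum weight common independent set. The only genuine subtlety is confirming that the perturbed function $f_x$ remains $4k$-distant submodular, which is immediate because modular functions satisfy the submodular inequality with equality on every pair; everything else is routine bookkeeping to ensure that the various polynomial factors combine into the bound stated in the theorem.
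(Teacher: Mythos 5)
Your proposal is correct and follows essentially the same route as the paper: apply Lemma \ref{lem:min_rank} to get $4k$-distant submodularity of $r_{\min}$, use Edmonds' characterization of the common independent set polytope so that an extreme point optimum of $(\mathrm{MI})$ is a maximum weight common independent set, and reduce optimization to separation via Gr\"{o}tschel--Lov\'{a}sz--Schrijver, where separation at $x$ is the minimization of $r_{\min}(T)-x(T)$, handled by Theorem \ref{thm:main}. Your extra bookkeeping (modularity of $x(T)$, $|S|^{4k}=(|S|^k)^4$) matches the paper's argument and adds nothing essentially different.
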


\section{\textit{W}[1]-hardness}
\label{sec:hardness}

In this section, we prove that $2k+1$-distant submodular function minimization includes the $k$-clique problem, which is \textit{W}[1]-hard when parameterized by $k$ \cite{downey1995}.
Let $G=(V,E)$ be an undirected graph and $k$ a positive integer.
The $k$-clique problem asks whether there exists a vertex set $X\subseteq V$ such that the subgraph $G[X]$ induced by $X$ is a $k$-clique.
Let $f:2^V\to \mathbb{Z}$ be a set function defined as follows:
\begin{align*}
f(X)=\begin{cases}
    -1 & \mathrm{if\ }G[X]\mathrm{\ is\ a\ }k\text{-}\mathrm{clique},\\
    0 & \mathrm{if\ }|X|\le k\ \mathrm{and\ }G[X]\mathrm{\ is\ not\ a\ }k\text{-}\mathrm{clique},\\
    |V\setminus X| & \mathrm{otherwise.}
\end{cases}
\end{align*}
Since the minimum value of $f$ is $-1$ if and only if $G$ includes a $k$-clique, the minimization of $f$ includes the $k$-clique problem.
We now show that $f$ is $2k+1$-distant submodular as follows.
Take $X,Y\subseteq V$ with $|X\bigtriangleup Y|\ge 2k+1$.
Since $|X\cup Y|\ge |X\bigtriangleup Y|\ge 2k+1$, we may assume without loss of generality that $|X|\ge k+1$.
This implies $f(X)=|V\setminus X|$.
If $f(Y)=|V\setminus Y|$, then 
\begin{align*}
f(X)+f(Y)=|V\setminus X|+|V\setminus Y|=|V\setminus (X\cup Y)|+|V\setminus (X\cap Y)|\ge f(X\cup Y)+f(X\cap Y),
\end{align*}
as required.
Consider the case when $f(Y)=0$. 
Since $G[X\cap Y]$ is not a $k$-clique, we have $f(X\cap Y)=0$.
Hence,
\begin{align*}
f(X)+f(Y)=|V\setminus X|\ge |V\setminus (X\cup Y)|=f(X\cup Y)+f(X\cap Y),
\end{align*}
as required.
Consider the case when  $f(Y)=-1$.
If $Y\subseteq X$, then since $X\cup Y=X$ and $X\cap Y=Y$ we have $f(X)+f(Y)=f(X\cup Y)+f(X\cap Y)$.
If $Y\not\subseteq X$, then since $|X\cap Y|<|Y|=k$ and $|X\cup Y|>|X|$, we have
\begin{align*}
f(X)+f(Y)=|V\setminus X|-1\ge |V\setminus (X\cup Y)|=f(X\cup Y)+f(X\cap Y),
\end{align*}
as required.
Hence, $f$ is $2k+1$-distant submodular.
Under the standard parameterized complexity hypothesis $W[1]\ne FPT$, the $k$-clique problem has no $g(k)\cdot (|V|+|E|)^{O(1)}$-time algorithm (FPT algorithm) for any computable function $g:\mathbb{N}\to \mathbb{N}$, which implies that the $k$-distant submodular function minimization problem with a ground set $S$ has no $g(k)\cdot |S|^{O(1)}$-time algorithm for any computable function $g$.

\section*{Acknowledgements}
We are grateful to Satoru Fujishige for his suggestion on $k$-distant submodular functions, and to Yutaro Yamaguchi and Yu Yokoi for informing us of the results of weighted matroid intersection under the minimum rank oracle.
This work was supported by Grant-in-Aid for JSPS Fellows Grant Number JP23KJ0379 and JST SPRING Grant Number JPMJSP2108.

\bibliography{main}

\begin{thebibliography}{10}

\bibitem{barasz2006}
M.~B{\'{a}}r{\'{a}}sz.
\newblock Matroid intersection for the min-rank oracle.
\newblock {\em Technical Report QP-2006-03, Egerv{\'{a}}ry Research Group}, 2006.

\bibitem{berczi2024}
M.~B{\'{a}}r{\'{a}}sz, K.~B^^c3^^a9rczi, T.~Kir^^c3^^a1ly, Y.~Yamaguchi, and Y.~Yokoi.
\newblock Matroid intersection under minimum rank oracle.
\newblock {\em arXiv preprint arXiv:2407.03229}, 2024.

\bibitem{berczi2023res}
K.~B^^c3^^a9rczi, T.~Kir^^c3^^a1ly, Y.~Yamaguchi, and Y.~Yokoi.
\newblock Matroid intersection under restricted oracles.
\newblock {\em SIAM Journal on Discrete Mathematics}, 37:1311--1330, 2023.

\bibitem{berczi2008}
K.~B{\'{e}}rczi and A.~Frank.
\newblock Variations for {Lov{\'{a}}sz'} submodular ideas.
\newblock In {\em Building Bridges}, pages 137--164. Springer Berlin Heidelberg, 2008.

\bibitem{chakrabarty2017}
D.~Chakrabarty, Y.T. Lee, A.~Sidford, and S.C. Wong.
\newblock Subquadratic submodular function minimization.
\newblock In {\em Proceedings of the 49th Annual {ACM} {SIGACT} Symposium on Theory of Computing (STOC 2017)}, pages 1220--1231, 2017.

\bibitem{downey1995}
R.G. Downey and M.R. Fellows.
\newblock Fixed-parameter tractability and completeness {II}: On completeness for {W[1]}.
\newblock {\em Theoretical Computer Science}, 141:109--131, 1995.

\bibitem{edmonds1970}
J.~Edmonds.
\newblock Submodular functions, matroids, and certain polyhedra.
\newblock In {\em Combinatorial Structures and Their Applications}, pages 69--87, Gordon and Breach, New York, 1970.

\bibitem{grotschel1981}
M.~Gr\"{o}tschel, L.~Lov{\'{a}}sz, and A.~Schrijver.
\newblock The ellipsoid method and its consequences in combinatorial optimization.
\newblock {\em Combinatorica}, 1:169--197, 1981.

\bibitem{iwata2001}
S.~Iwata, L.~Fleischer, and S.~Fujishige.
\newblock A combinatorial strongly polynomial algorithm for minimizing submodular functions.
\newblock {\em Journal of the ACM}, 48:761--777, 2001.

\bibitem{iwata2009}
S.~Iwata and J.B. Orlin.
\newblock A simple combinatorial algorithm for submodular function minimization.
\newblock In {\em Proceedings of the 20th Annual {ACM}-{SIAM} Symposium on Discrete Algorithms (SODA 2009)}, pages 1230--1237, 2009.

\bibitem{lee2015}
Y.T. Lee, A.~Sidford, and S.C. Wong.
\newblock A faster cutting plane method and its implications for combinatorial and convex optimization.
\newblock In {\em Proceedings of the 2015 IEEE 56th Annual Symposium on Foundations of Computer Science (FOCS 2015)}, pages 1049--1065, 2015.

\bibitem{mizutani2022}
R.~Mizutani.
\newblock Supermodular extension of {Vizing's} edge-coloring theorem.
\newblock {\em arXiv preprint arXiv:2211.07150}, 2022.

\bibitem{mizutani2024}
R.~Mizutani and Y.~Yoshida.
\newblock Polynomial algorithms to minimize 2/3-submodular functions.
\newblock In {\em Integer Programming and Combinatorial Optimization: Proceedings of the 25th IPCO}, pages 323--336. Springer, 2024.

\bibitem{orlin2009}
J.B. Orlin.
\newblock A faster strongly polynomial time algorithm for submodular function minimization.
\newblock {\em Mathematical Programming}, 118:237--251, 2009.

\bibitem{oxley2011}
J.~Oxley.
\newblock {\em Matroid Theory}.
\newblock Oxford University Press, Oxford, second edition, 2011.

\bibitem{rajpal1998}
S.~Rajpal.
\newblock On binary $k$-paving matroids and reed-muller codes.
\newblock {\em Discrete Mathematics}, 190:191--200, 1998.

\bibitem{schrijver2000}
A.~Schrijver.
\newblock A combinatorial algorithm minimizing submodular functions in strongly polynomial time.
\newblock {\em Journal of Combinatorial Theory Series B}, 80:346--355, 2000.

\bibitem{schrijver2003}
A.~Schrijver.
\newblock {\em Combinatorial Optimization: Polyhedra and Efficiency}.
\newblock Springer, Heidelberg, 2003.

\bibitem{tardos1986}
{\'{E}}.~Tardos.
\newblock A strongly polynomial algorithm to solve combinatorial linear programs.
\newblock {\em Operations Research}, 34:250--256, 1986.

\end{thebibliography}
\bibliographystyle{plain}
\end{document}